\providecommand{\U}[1]{\protect\rule{.1in}{.1in}}
\newtheorem{theorem}{Theorem}[section]
\newtheorem{conjecture}[theorem]{Conjecture}
\newtheorem{lemma}[theorem]{Lemma}
\newtheorem{proposition}[theorem]{Proposition}
\theoremstyle{definition}
\newtheorem{definition}[theorem]{Definition}
\theoremstyle{remark}
\newtheorem{example}[theorem]{Example}
\newtheorem{remark}[theorem]{Remark}
\numberwithin{equation}{section}
\begin{document}

\title{Liouville transformation, analytic approximation of transmutation operators
and solution of spectral problems}
\author{Vladislav V. Kravchenko, Samy Morelos and Sergii M. Torba\\{\small Departamento de Matem\'{a}ticas, CINVESTAV del IPN, Unidad
Quer\'{e}taro, }\\{\small Libramiento Norponiente No. 2000, Fracc. Real de Juriquilla,
Quer\'{e}taro, Qro. C.P. 76230 MEXICO}\\{\small e-mail: vkravchenko@math.cinvestav.edu.mx, smorelos@math.cinvestav.mx,
storba@math.cinvestav.edu.mx \thanks{The authors acknowledge the support from CONACYT, Mexico via the projects 166141 and 222478.}}}
\maketitle

\begin{abstract}
A method for solving spectral problems for the Sturm-Liouville equation
$(pv^{\prime})^{\prime}-qv+\lambda rv$ $=0$ based on the approximation of the
Delsarte transmutation operators combined with the Liouville transformation is
presented. The problem of numerical approximation of solutions and of eigendata is reduced to approximation of a pair of functions depending on the
coefficients $p$, $q$ and $r$ by a finite linear combination of certain
specially constructed functions related to generalized wave
polynomials introduced in \cite{KKTT}, \cite{KT Transmut}. The method allows
one to compute both lower and higher eigendata with an extreme accuracy.
Several necessary results concerning the action of the Liouville
transformation on formal powers arising in the method of spectral parameter
power series are obtained as well as the transmutation operator for the
Sturm-Liouville operator $\frac{1}{r}\left(  \frac{d}{dx}p\frac{d}%
{dx}-q\right)  $.

\end{abstract}

\section{Introduction}
Consider the linear second order differential equation
\begin{equation}
(pv^{\prime})^{\prime}-qv+\lambda rv=0\label{SL_Intro}%
\end{equation}
where $p$, $q$ and $r$ are reasonably good functions (the precise conditions
imposed on them are specified below) and $\lambda$ is a complex number. In a
constantly increasing number of applications it is necessary to find its
solution for a big set of different values of the spectral parameter $\lambda
$. Preferably the method for solving (\ref{SL_Intro}) is sought to be accurate
and fast. Another desirable feature of the method is the possibility to obtain
the approximate solution in an analytical form. This allows one to study
different qualitative properties of the solution and of related quantities.

Meanwhile the accuracy and the fast computation can be attributes of purely
numerical techniques (we refer to \cite{Pryce} for a recommendable
introduction into the subject), the availability of an analytical form for the
approximate solution is a feature of some asymptotic methods, as, e.g., the
WKB method (see, e.g., \cite{Ishimaru}) or of the spectral parameter power
series (SPPS) method (see, e.g., \cite{KrCV08}, \cite{KrPorter2010},
\cite{KKRosu}, \cite{KrTNewSPPS}). In both the asymptotic and the SPPS methods
there appear natural limitations on the largeness or smallness of the
parameter $\lambda$. In the recent paper \cite{KT AnalyticApprox} a method
offering all the above mentioned advantages: accuracy, speed, analytical form
of solution and, additionally, free of the limitations on the size of
$\lambda$, was developed for equations of the form
\begin{equation}
u^{\prime\prime}-qu+\lambda u=0.\label{Schr_Intro}%
\end{equation}
The method is based on the concept of transmutation operators introduced by
Delsarte in \cite{Delsarte} and later studied in dozens of publications (see
the review \cite{Sitnik} and the books\ \cite{BegehrGilbert}, \cite{Carroll},
\cite{Levitan}, \cite{Marchenko} and \cite{Trimeche}). Several recent results
concerning the transmutation operators made it possible in \cite{KT
AnalyticApprox} to convert them from a purely theoretical tool into an
efficient practical method for solving (\ref{Schr_Intro}) and related spectral problems.

Apparently the availability of a good method for (\ref{Schr_Intro}) signifies
its availability also for (\ref{SL_Intro}). It is well known that these
equations are related by the Liouville transformation. However, application of
the Liouville transformation, first, imposes additional restrictions on the
coefficients $p$, $q$ and $r$ (for example, $p$ and $r$ must be real-valued)
and second, implies the transformation of all the functions involved as well
as of the derivatives of some of them. This may lead to undesirable
limitations and difficulties. The aim of the present paper is to develop the
transmutation method directly for equation (\ref{SL_Intro}) with no necessity
to transform it into (\ref{Schr_Intro}). For this we study the action of the
Liouville transformation on the transmutation operators as well as on the
systems of functions involved, called formal powers, which emerge in relation
with the SPPS method and are the main ingredient in the transmutation method
\cite{KT AnalyticApprox}. We prove that the Liouville transformation maps
formal powers of (\ref{SL_Intro}) into formal powers of (\ref{Schr_Intro})
which gives us the possibility to develop the transmutation method directly
for (\ref{SL_Intro}) and to prove corresponding estimates for the accuracy of
approximation. Moreover, we observe that final representations for approximate
solutions of (\ref{SL_Intro}) do not involve the Liouville transformation and
apparently are not restricted by its applicability. One of the developed numerical
examples confirms this observation and the applicability of the proposed
method under weaker conditions than required by the Liouville transformation.
This is formulated as a conjecture.

In the next Section \ref{Sect Liouville} we remind some known facts on the
Liouville transformation. In Section \ref{Sect Formal Powers} we study the
action of the Liouville transformation on the systems of formal powers. In
Section \ref{Sect Transmutation} we combine the transmutation operator for
(\ref{Schr_Intro}) with the Liouville transformation and obtain a
transmutation operator transforming the differential operator from
(\ref{SL_Intro}) into the operator $\frac{d^{2}}{dx^{2}}$ and study its
properties. This leads to an analytical form for the approximate solution and
corresponding estimates (Theorem \ref{Thm Approxim Sols}). Representations for
approximate derivatives of the solutions are obtained as well. In Section
\ref{Sect Numerical} we propose a computational algorithm based on the
obtained representations. describe its numerical realization and discuss some
test problems. We show that big numbers of eigendata can be found with a
remarkable accuracy and find out that the restrictions imposed by the
Liouville transformation are most likely superfluous.

\section{The Liouville transformation}\label{Sect Liouville}

Let $(A,B)$ denote an interval in $\mathbb{R}$. By $AC_{loc}(A,B)$ we denote
all complex valued functions, absolutely continuous with respect to Lebesgue's
measure on all compact subintervals of $(A,B)$.

\begin{lemma}[\cite{Everitt}] \label{definitionH} Let $p$ and $r:(A,B)\rightarrow\mathbb{R}$
be such that $p,p^{\prime},r,r^{\prime}\in AC_{loc}(A,B)$ and $p(y),r(y)>0$
for all $y\in(A,B).$ If $y_{0}\in(A,B)$ and $x_{0}\in(a,b),$ then the mapping
$l:(A,B)\rightarrow(a,b)$ defined by
\[
l(y):=x_{0}+\int_{y_{0}}^{y}\left\{  r(s)/p(s)\right\}  ^{1/2}%
ds,\quad \text{ for all }y\in(A,B)
\]
has an inverse mapping $l^{-1}:(a,b)\rightarrow(A,B)$, where
\[
a=x_{0}-\int_{A}^{y_{0}}\left\{  r(s)/p(s)\right\}  ^{1/2}%
ds\quad\text{ and }\quad b:=x_{0}+\int_{y_{0}}^{B}\left\{  r(s)/p(s)\right\}  ^{1/2}ds.
\]
\end{lemma}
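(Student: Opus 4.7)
The plan is to establish that $l$ is a strictly increasing continuous bijection from $(A,B)$ onto $(a,b)$, whence the inverse mapping exists automatically. Throughout, the hypotheses $p,r\in AC_{loc}(A,B)$ together with positivity guarantee that the integrand $\sqrt{r(s)/p(s)}$ is continuous and strictly positive on $(A,B)$, and hence Lebesgue integrable on every compact subinterval; this makes $l(y)$ well defined for every $y\in(A,B)$.

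First I would verify that $l$ is continuous and strictly increasing. Continuity follows from dominated convergence (or directly from the continuity of the integrand together with the definition of the Lebesgue integral over variable endpoints). Strict monotonicity follows because for $y_{1}<y_{2}$ in $(A,B)$ we have
\[
l(y_{2})-l(y_{1})=\int_{y_{1}}^{y_{2}}\left\{r(s)/p(s)\right\}^{1/2}\,ds>0,
\]
the integrand being strictly positive on the compact subinterval $[y_{1},y_{2}]$.

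Next I would compute the image of $l$. From the definition, the one-sided limits are
\[
\lim_{y\to A^{+}}l(y)=x_{0}-\int_{A}^{y_{0}}\left\{r(s)/p(s)\right\}^{1/2}ds=a,\qquad \lim_{y\to B^{-}}l(y)=x_{0}+\int_{y_{0}}^{B}\left\{r(s)/p(s)\right\}^{1/2}ds=b,
\]
where $a$ and $b$ are finite or $\pm\infty$ depending on whether the respective improper integrals converge. By strict monotonicity and continuity, $l$ takes values in $(a,b)$, and by the intermediate value theorem it attains every value in that open interval. Consequently $l:(A,B)\to(a,b)$ is a continuous bijection.

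Finally, since a strictly monotone continuous bijection between intervals admits a continuous strictly monotone inverse, the mapping $l^{-1}:(a,b)\to(A,B)$ exists with the stated properties. The only real subtlety lies in the bookkeeping when one (or both) of the endpoints $a,b$ is infinite, but since the definition of $(a,b)$ already absorbs this case, no separate argument is needed. I anticipate no serious obstacle; the entire statement is essentially a direct consequence of the fundamental theorem of calculus applied to a strictly positive continuous density.
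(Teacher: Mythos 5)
Your proof is correct: continuity and strict positivity of $\left\{r/p\right\}^{1/2}$ give (locally Lipschitz) continuity and strict monotonicity of $l$, the monotone limits at the endpoints identify the image as exactly $(a,b)$ (finite or infinite), and a strictly increasing continuous surjection between intervals has a continuous strictly increasing inverse. Note that the paper offers no proof of this lemma at all---it is quoted from Everitt's catalogue---so there is no internal argument to compare against; yours is the standard one, and it is worth observing that you only use continuity and positivity of $p$ and $r$, the stronger hypotheses $p,p^{\prime},r,r^{\prime}\in AC_{loc}(A,B)$ being needed only later so that the potential $Q$ in the Liouville transformation is well defined.
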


In what follows we assume that both segments $[A,B]$ and $[a,b]$ (and, hence, the integral $\int_A^B\{r(s)/p(s)\}^{1/2}\,ds$) are finite.

\begin{remark}
If in Lemma \ref{definitionH} we choose $x_{0}=0$ and $y_{0}\in\lbrack A,B]$
such that
\begin{equation}
\int_{A}^{y_{0}}\left\{  r(s)/p(s)\right\}  ^{1/2}ds=\int_{y_{0}}^{B}\left\{
r(s)/p(s)\right\}  ^{1/2}ds \label{symetricinterval}%
\end{equation}
then $l:[A,B]\rightarrow\lbrack-b,b]$. Due to Bolzano's theorem such $y_{0}$ exists.
\end{remark}

According to \cite{Everitt} the following theorem establishes the possibility
of the Liouville transformation under the minimal possible requirements.

\begin{theorem}[\cite{Everitt}] \label{transfdeLiouville} Let the functions $p$ and $r$ satisfy
the conditions of Lemma \ref{definitionH}. Then the Sturm-Liouville
differential equation
\begin{equation}
(p(y)v^{\prime})^{\prime}-q(y)v=-\lambda r(y)v\quad \text{ for all }y\in(A,B),
\label{SL}%
\end{equation}
is related with the Schr\"{o}dinger differential equation
\begin{equation}
u^{\prime\prime}-Q(x)u=-\lambda u,\quad \text{ for all }x\in(a,b) \label{Schr}%
\end{equation}
by the Liouville transformation of the variables $y$ and $v$ into
$x$ and $u$,
\[%
\begin{split}
u(x)  &  :=u(l(y)):=\left\{  p(y)r(y)\right\}  ^{1/4}v(y)\quad \text{ for all }y\in
(A,B)\\
&  :=\left\{  p(l^{-1}(x))r(l^{-1}(x))\right\}  ^{1/4}v(l^{-1}%
(x))\quad \text{ for all }x\in(a,b),
\end{split}
\]
and the coefficient $Q$ is given by the relation
\begin{equation}
Q(x)=r(y)^{-1}q(y)-\left\{  r(y)^{-3}p(y)\right\}  ^{1/4}[p(y)(\left\{
p(y)r(y)\right\}  ^{-1/4})^{\prime}]^{\prime}\quad \text{ for all }y\in(A,B).
\label{Q}%
\end{equation}
\end{theorem}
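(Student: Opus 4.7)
The plan is to carry out a direct chain-rule computation, written so that the two governing equations are matched term by term. Throughout, denote $' = d/dy$, $\dot{\ } = d/dx$, and $\alpha(y) := \sqrt{r(y)/p(y)}$, so that by Lemma \ref{definitionH} one has $dx/dy = \alpha(y)$ and $1/\alpha = \sqrt{p/r}$. Set $h(y):=\{p(y)r(y)\}^{1/4}$ and write $u(x)=h(y)v(y)$ with $y=l^{-1}(x)$.

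First I would differentiate $u = hv$ twice with respect to $x$ via the chain rule. One gets $\dot u = (h'v+hv')/\alpha$ and, after a second application,
\begin{equation*}
\ddot u \;=\; \frac{1}{\alpha}\frac{d}{dy}\!\left(\frac{h'v+hv'}{\alpha}\right) \;=\; \frac{1}{\alpha^{2}}\bigl(h''v + 2h'v' + hv''\bigr) \;-\; \frac{\alpha'}{\alpha^{3}}(h'v+hv').
\end{equation*}
Since $1/\alpha^{2}=p/r$, this expresses $\ddot u$ as a linear combination of $v,v',v''$ with coefficients built from $p,r,h$. Next, using the Sturm--Liouville equation (\ref{SL}) in the form $pv'' = -p'v' + qv - \lambda r v$, I would eliminate $v''$ in the $hv''$ term. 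This produces an explicit expression $\ddot u = -\lambda h v + A(y)\,v + B(y)\,v'$, where $A$ and $B$ are computable from $p,q,r,h$ and their derivatives.

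The key step is then to show that the choice $h=(pr)^{1/4}$ is precisely what makes the $v'$-coefficient $B(y)$ vanish identically: collecting terms one finds $B=0$ equivalent to $4(\ln h)' = (\ln(pr))'$, i.e.\ $h^{4}=C\,pr$, and the normalization $h=(pr)^{1/4}$ fixes the constant. With $B\equiv 0$, the equation reads $\ddot u - \lambda h v = A(y)\,v = \bigl(A(y)/h\bigr)u$, so the Schr\"odinger equation (\ref{Schr}) holds with $Q(x) := A(y)/h(y)$ evaluated at $y=l^{-1}(x)$.

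The remaining, and principal, obstacle is a bookkeeping matter: showing that the formula $Q=A/h$ obtained from the direct computation coincides with the compact expression (\ref{Q}) of the statement. For this I would rewrite $(\{pr\}^{-1/4})'=(1/h)'=-h'/h^{2}$ and expand $[p(1/h)']' = -(ph'/h^{2})'$; multiplication by the factor $\{r^{-3}p\}^{1/4}=h/r$ and an expansion of the derivative yield a three-term expression in $p,r,h,h',h''$. I would then match this, term by term, against $A/h - q/r$ obtained from the chain-rule computation. The identity reduces again, after simplification, to the relation $h'/h = \tfrac{1}{4}(\ln(pr))'$ which holds by the definition of $h$. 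This completes the verification that (\ref{SL}) and (\ref{Schr}) are equivalent under the stated transformation with $Q$ given by (\ref{Q}).
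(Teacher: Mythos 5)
Your computation is correct; I checked the key identities. With $h=(pr)^{1/4}$ and $\alpha=\sqrt{r/p}$ one indeed gets $\ddot u=\frac{1}{\alpha^{2}}(h''v+2h'v'+hv'')-\frac{\alpha'}{\alpha^{3}}(h'v+hv')$, the $v'$-coefficient after eliminating $v''$ via (\ref{SL}) is $\frac{h}{2r^{2}}\bigl[(p'r+pr')-2rp'-(pr'-rp')\bigr]=0$ precisely because $(\ln h)'=\tfrac14(\ln pr)'$, and the resulting $Q=A/h$ does reduce to (\ref{Q}) using $\{r^{-3}p\}^{1/4}=h/r$ and $(1/h)'=-h'/h^{2}$. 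However, be aware that the paper does not prove Theorem \ref{transfdeLiouville} at all: it is quoted from \cite{Everitt} as a known result, so there is no ``paper proof'' to match against. The closest internal analogue is the proof of the proposition establishing $BL=LC$ (equation (\ref{BL=LC})), which runs the same chain-rule computation --- $u_{x}=(\rho v)_{y}/l_{y}$, $u_{xx}=\bigl((\rho v)_{yy}-u_{x}l_{yy}\bigr)/l_{y}^{2}$ --- but with $Q$ taken as given by (\ref{Q}) and the decisive identity $-u_{xx}+Qu=-\frac{\rho}{r}\{pv_{yy}+p_{y}v_{y}-qv\}$ dismissed as a ``straightforward calculation.'' Your write-up is therefore a genuine complement rather than a duplicate: instead of verifying a prescribed $Q$, you derive the transformation, showing that annihilating the $v'$-term forces $h^{4}=C\,pr$, which explains where the weight $(pr)^{1/4}$ comes from and yields (\ref{Q}) as output rather than input. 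Two minor points to tighten: under the hypotheses of Lemma \ref{definitionH} ($p,p',r,r'\in AC_{loc}$, $p,r>0$) the second derivatives $h''$, $v''$ exist only almost everywhere, so all your pointwise manipulations should be read a.e.; and your final ``bookkeeping'' step is only sketched --- it does go through (I verified that the discrepancy between $A/h-q/r$ and $-\frac{h}{r}\bigl[p(1/h)'\bigr]'$ is $\frac{h'}{rh^{2}}\bigl(2ph'-p'h-\frac{h(pr'-rp')}{2r}\bigr)$, which vanishes by the same logarithmic identity), but in a final version you should display that cancellation explicitly rather than assert it.
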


\begin{remark}
The function $q$ in the last theorem can be complex-valued.
\end{remark}

Denote $\rho(y):=(p(y)r(y))^{1/4}$. The Liouville transformation can be
considered as an operator $L:C[A,B]\rightarrow C[a,b]$ acting according to the rule%
\[
u(x)=L[v(y)]=\rho(l^{-1}(x))v(l^{-1}(x)).
\]

Let us introduce the following notations for the differential operators
\[
A=-\frac{d^{2}}{dx^{2}},\quad B=-\frac{d^{2}}{dx^{2}}+Q(x)\quad\text{ and }\quad C=-\frac
{1}{r(y)}\left(  \frac{d}{dy}\left(p(y)\frac{d}{dy}\right)-q(y)\right)  .
\]

The following proposition summarizes the main properties of the
operator $L$.

\begin{proposition}
\begin{enumerate}
\item The uniform norm of the operator $L$ is $\| L\|=\sup
_{y\in[A,B]}\left\vert \rho(y)\right\vert .$
\item The inverse operator is
defined by $v(y)=L^{-1}[u(x)]=\frac{1}{\rho(y)}u(l(y)).$
\item The equality
\begin{equation}
BL=LC \label{BL=LC}%
\end{equation}
is valid on $C[A,B]$.
\end{enumerate}
\end{proposition}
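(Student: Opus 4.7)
The three parts are of very different character: parts 1 and 2 are essentially unpacking the definitions, while part 3 is the substantive computation that makes the whole Liouville construction work.

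For part 1, I would use that $l:[A,B]\to[a,b]$ is a homeomorphism, so the change of variable $y=l^{-1}(x)$ yields
\[
\|Lv\|_{C[a,b]}=\sup_{x\in[a,b]}\bigl|\rho(l^{-1}(x))v(l^{-1}(x))\bigr|=\sup_{y\in[A,B]}|\rho(y)v(y)|\le\Bigl(\sup_{y\in[A,B]}|\rho(y)|\Bigr)\|v\|_{C[A,B]}.
\]
Hence $\|L\|\le\sup_{y}|\rho(y)|$, and the test function $v\equiv1\in C[A,B]$ attains the bound.

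For part 2, I would simply substitute the candidate formula into $L^{-1}L$ and $LL^{-1}$: using $l^{-1}\circ l=\mathrm{id}_{[A,B]}$ and $l\circ l^{-1}=\mathrm{id}_{[a,b]}$, the factors of $\rho$ cancel and both compositions reduce to the identity.

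For part 3, I would verify $BLv=LCv$ by direct calculation on $v\in C^{2}[A,B]$ (the natural domain on which both sides make sense; the proposition's mention of $C[A,B]$ should be read in this sense). Setting $y=l^{-1}(x)$ and $\sigma(y):=\sqrt{p(y)/r(y)}=dy/dx$, two applications of the chain rule give
\[
\frac{d}{dx}(Lv)=\sigma\,(\rho v)',\qquad\frac{d^{2}}{dx^{2}}(Lv)=\sigma^{2}(\rho v)''+\sigma\sigma'(\rho v)'.
\]
I would then expand $(\rho v)''=\rho''v+2\rho'v'+\rho v''$, substitute into $BLv=-\tfrac{d^{2}}{dx^{2}}(Lv)+Q\,Lv$, and compare coefficients of $v''$, $v'$ and $v$ with those of $LCv=-\tfrac{\rho}{r}(pv''+p'v'-qv)$. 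Using $\sigma^{2}=p/r$ the coefficient of $v''$ matches immediately; using $\rho=(pr)^{1/4}$ and the identity $2\sigma\sigma'=(p/r)'$, the coefficient of $v'$ collapses to the elementary relation $p^{1/4}r^{-3/4}=p/(pr)^{3/4}$; and the coefficient of $v$ matches exactly because of the definition~\eqref{Q} of $Q$ in Theorem~\ref{transfdeLiouville}.

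The main obstacle is simply the bookkeeping in part 3: the formula for $Q$ is tailored precisely so that a non-obvious cancellation occurs in the zeroth-order coefficient, and tracing this cancellation requires careful handling of the derivatives of $\rho=(pr)^{1/4}$ together with those of $\sigma=\sqrt{p/r}$. No new ideas are needed beyond the chain rule and the explicit form of $Q$, but the algebra has to be executed with care.
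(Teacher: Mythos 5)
Your proposal is correct and follows essentially the same route as the paper: parts 1 and 2 are dispatched as routine (the paper calls them obvious), and for part 3 the paper likewise applies the chain rule through $x=l(y)$ to express $u_x$ and $u_{xx}$ in terms of $(\rho v)_y$ and $(\rho v)_{yy}$, then verifies by ``straightforward calculation'' (your explicit matching of the $v''$, $v'$ and $v$ coefficients, with the zeroth-order term cancelling precisely by the definition of $Q$) that $Bu=\rho\,Cv$, which is the identity $BL=LC$. Your remark that the equality must be read on twice-differentiable functions is a fair clarification of the statement, implicit in the paper's own computation.
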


\begin{proof}
The proof of 1. and 2. is obvious. Let us prove 3. Due to the equality
$\rho(y)v(y)=u(l(y))=u(x)$ with $x=l(y)=x_{0}+\int_{y_{0}}^{y}\left(
r(s)/p(s)\right)  ^{1/2}ds\text{ (see Lemma \ref{definitionH})}$ we obtain
$(\rho v)_{y}=u_{x}l_{y}$ and hence
\begin{equation}
u_{x}=\frac{(\rho v)_{y}}{l_{y}}. \label{ux}%
\end{equation}
Then for the second derivatives we have $(\rho v)_{yy}=u_{xx}l_{y}^{2}%
+u_{x}l_{yy}$ and
\[
u_{xx}=\frac{(\rho v)_{yy}-u_{x}l_{yy}}{l_{y}^{2}}.
\]
Straightforward calculation gives us the equality
\begin{equation}%
-u_{xx}(x)+Q(x)u(x)=-\frac{\rho(y)}{r(y)}\left\{  p(y)v_{yy}(y)+p_{y}%
(y)v_{y}(y)-q(y)v(y)\right\}
\end{equation}
and hence $Bu(x)=\rho(y)Cv(y)$. By definition of $L$ we get $Bu(x)=BL[v(y)]$
and $\rho(y)Cv(y)=\rho(l^{-1}(x))\left[  Cv\right]  (l^{-1}(x))=LC[v(y)]$
which proves\ (\ref{BL=LC}).
\end{proof}

\section{Formal powers}\label{Sect Formal Powers}

In \cite{KrCV08} (see also \cite{KrPorter2010} and \cite{APFT}) a
representation for solutions of the Sturm-Liouville equation in the form of a
spectral parameter power series (SPPS) was obtained for which the construction
of certain systems of functions called formal powers is essential. We
introduce such systems for (\ref{Schr}) and for (\ref{SL}) and establish a
relation between them under the Liouville transformation.

\begin{definition}[\cite{KrCV08}] \label{casopq} Let $f\in C^{2}(a,b)\cap C^{1}[a,b]$ be a complex
valued solution of the equation%
\begin{equation}
f^{\prime\prime}-Q(x)f=0 \label{Schrhom}%
\end{equation}
such that $f(x)\neq0$ for any $x\in\lbrack a,b]$. The interval $(a,b)$ is
supposed to be finite. Let us consider the two systems of auxiliary functions $\bigl\{\widetilde{X}^{(n)}\bigr\}_{n=0}^\infty$ and $\bigl\{X^{(n)}\bigr\}_{n=0}^\infty$ defined recursively as follows
\begin{align}
\widetilde{X}^{(0)}(x)& \equiv X^{(0)}(x)\equiv1, \label{X1}\\
\widetilde{X}^{(n)}(x)&=n \int_{x_{0}}^{x} \widetilde{X}^{(n-1)}(s)\left(  f^{2}(s)\right)  ^{(-1)^{n-1}}\,\mathrm{d}s,\label{X2}\\
X^{(n)}(x)&=n\int_{x_{0}}^{x}X^{(n-1)}(s)\left(  f^{2}(s)\right)  ^{(-1)^{n}}\,\mathrm{d}s,\label{X3}
\end{align}
where $x_{0}$ is an arbitrary fixed point in $[a,b].$ Then the system of functions called formal
powers corresponding to (\ref{Schr}) is defined for any $k\in\mathbb{N}\cup\{0\}$ by the relations 
\[
\varphi_{k}(x)=\begin{cases}
f(x)X^{(k)}(x), & k \text{ odd,}\\
f(x)\widetilde{X}^{(k)}(x), & k \text{ even}.
\end{cases}
\]
\end{definition}

The other ``half'' of the recursive integrals
$\widetilde{X}^{(n)}$ and $X^{(n)}$ are used to define another system of
functions
\[
\psi_{k}(x)=\begin{cases}
\frac{1}{f(x)}X^{(k)}(x), & k \text{ even,}\\
\frac{1}{f(x)}\widetilde{X}^{(k)}(x), & k \text{ odd.}
\end{cases}
\]

\begin{remark}
Definition \ref{casopq} requires the existence of a nonvanishing complex
valued solution of (\ref{Schrhom}). In the case when $Q$ is a continuous real
valued function on $[a,b]$, (\ref{Schrhom}) possesses two linearly independent
regular, real-valued solutions\/ $f_{1}$ and $f_{2}$ whose zeros alternate.
Hence one may choose $f=f_{1}+if_{2}$, and this solution has no zeros in
$[a,b]$. If $Q$ is a continuous complex valued function on $[a,b]$ one can
guarantee the existence of a nonvanishing solution \cite[Remark 5]{KrPorter2010}. Let us note that for the construction of the system of formal powers the knowledge of a nonvanishing solution is not
strictly necessary. When $f$ possesses zeros the system of formal powers can
be constructed following the procedure from \cite{KrTNewSPPS}.
\end{remark}

Analogously, let us introduce a system of formal powers corresponding to equation
(\ref{SL}).

\begin{definition}[\cite{KrPorter2010}] \label{casopqr} Let $g,p,r:[A,B]\rightarrow\mathbb{C}$ be
functions such that $g^{2}r$ and $1/\left(  g^{2}p\right)  $ are continuous on
$[A,B]$. Then the following two families of auxiliary functions are well
defined
\begin{align*}
\widetilde{Y}^{(0)}(y)&\equiv Y^{(0)}(y)\equiv1,\label{Y1}\\
\widetilde Y^{(k)}(y)&=\begin{cases}
k\int_{y_{0}}^{y}\widetilde Y^{(k-1)}(s)g^{2}(s)r(s)ds, & k \text{ odd,}\\
k\int_{y_{0}}^{y}\widetilde Y^{(k-1)}(s)\frac{1}{g^{2}(s)p(s)}ds, & k
\text{ even,}%
\end{cases}\\
Y^{(k)}(y)&=\begin{cases}
k\int_{y_{0}}^{y}Y^{(k-1)}(s)\frac{1}{g^{2}(s)p(s)}ds, & k \text{ odd,}\\
k\int_{y_{0}}^{y}\widetilde{Y}^{(k-1)}(s)g^{2}(s)r(s)ds, & k \text{ even,}%
\end{cases}
\end{align*}
where $y_{0}$ is an arbitrary fixed point in $[A,B]$ such that $p$ is
continuous at $y_{0}$ and $p(y_{0})\neq0$.
\end{definition}

Now let us assume additionally that the function $g$ is a solution of the
equation
\begin{equation}
(p(y)g^{\prime})^{\prime}-q(y)g=0. \label{SLhom}%
\end{equation}
Then similarly to Definition \ref{casopq} we define the formal powers
associated to equation (\ref{SL}).

\begin{definition}
\label{psik}  Under the conditions of Definition \ref{casopqr} the formal powers associated to equation (\ref{SL}) are defined for any $k\in\mathbb{N}\cup\{0\}$ as follows
\begin{align*}
%\Phi_{0}(y)&=g(y) & \Psi_{0}(y)&=\frac 1{g(y)}\\
\Phi_{k}(y)&=\begin{cases}
g(y)Y^{(k)}(y), & k \text{ odd,}\\
g(y)\widetilde{Y}^{(k)}(y), & k \text{ even.}%
\end{cases} &
\Psi_{k}(y)&=\begin{cases}
\frac{1}{g(y)}Y^{(k)}(y), & k \text{ even,}\\
\frac{1}{g(y)}\widetilde{Y}^{(k)}(y), & k \text{ odd.}%
\end{cases}
\end{align*}
\end{definition}

\begin{theorem}
\label{relphikpsik} Let $p,q,r,Q$ be functions satisfying the conditions of
Theorem \ref{transfdeLiouville}. Assume that (\ref{SLhom}) possesses a
particular solution $g$ on $(A,B)$ such that the conditions of Definition
\ref{casopqr} are fulfilled, and hence $f(x):=f(l(y))=\rho(y)g(y)$ is a
particular solution of (\ref{Schrhom}) on $(a,b)$. Then the following
relations are valid%
\begin{equation}
\rho(y)\Phi_{n}(y)=\varphi_{n}(x)\quad \text{ for all }n\in\mathbb{N}\cup\left\{
0\right\}, \label{psin=phin}%
\end{equation}
that is
\[
\varphi_{n}(x)=L\left[  \Phi_{n}(y)\right]  .
\]

\end{theorem}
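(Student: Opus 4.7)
The plan is to reduce the claim to a pair of identities linking the one-variable auxiliary families in Definition \ref{casopq} to those in Definition \ref{casopqr} via the Liouville change of variable, and then to recover $\varphi_{n}=L[\Phi_{n}]$ by unpacking the even/odd cases. First, I would record the two structural inputs: the hypothesis $f(x)=\rho(y)g(y)$ at $x=l(y)$, which gives
\[
f^{2}(l(y))=\rho^{2}(y)g^{2}(y)=(p(y)r(y))^{1/2}g^{2}(y),
\]
and the differential $dx=l'(y)\,dy=(r(y)/p(y))^{1/2}\,dy$. I would also note in passing that $f$ is indeed a solution of \eqref{Schrhom}: since $Cg=0$, the intertwining identity \eqref{BL=LC} gives $Bf=B L[g]=L C[g]=0$, so Definition \ref{casopq} applies to $f$.

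The technical heart of the argument is the claim
\[
\widetilde{X}^{(n)}(l(y))=\widetilde{Y}^{(n)}(y),\qquad X^{(n)}(l(y))=Y^{(n)}(y)\qquad\text{for all }n\ge 0,
\]
which I would prove by a joint induction on $n$. The base $n=0$ is immediate since all four functions equal $1$. For the inductive step I would change variables $s=l(t)$ in the integrals of \eqref{X2} and \eqref{X3}, using
\[
\frac{1}{f^{2}(l(t))}\cdot l'(t)=\frac{1}{(pr)^{1/2}g^{2}}\cdot\Bigl(\frac{r}{p}\Bigr)^{1/2}=\frac{1}{p(t)g^{2}(t)},
\]
\[
f^{2}(l(t))\cdot l'(t)=(pr)^{1/2}g^{2}\cdot\Bigl(\frac{r}{p}\Bigr)^{1/2}=r(t)g^{2}(t),
\]
so the two Jacobian-weighted integrands for \eqref{Schr} are precisely the weights $1/(g^{2}p)$ and $g^{2}r$ appearing in Definition \ref{casopqr}. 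Matching the parity cases (the parity exponent $(-1)^{n-1}$ for $\widetilde{X}^{(n)}$ versus $(-1)^{n}$ for $X^{(n)}$ matches the odd/even dichotomy for $\widetilde{Y}^{(n)}$ and $Y^{(n)}$) and applying the inductive hypothesis to replace $\widetilde{X}^{(n-1)}(l(t))$ and $X^{(n-1)}(l(t))$ by $\widetilde{Y}^{(n-1)}(t)$ and $Y^{(n-1)}(t)$ then gives exactly the defining integrals of $\widetilde{Y}^{(n)}(y)$ and $Y^{(n)}(y)$.

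Once these two identities are in hand, the conclusion is essentially bookkeeping. For odd $n$,
\[
\varphi_{n}(x)=f(x)X^{(n)}(x)=\rho(y)g(y)\,Y^{(n)}(y)=\rho(y)\Phi_{n}(y),
\]
and for even $n$ the same calculation with $\widetilde{X}^{(n)}$ and $\widetilde{Y}^{(n)}$ gives $\varphi_{n}(x)=\rho(y)\Phi_{n}(y)$; translating via $L$ yields $\varphi_{n}=L[\Phi_{n}]$. I expect the only real obstacle to be keeping the parity bookkeeping straight in the inductive step, in particular pairing the correct case of each recursion so that the Jacobian $(r/p)^{1/2}$ combines with the correct power of $f^{2}$ to land on the correct weight ($1/(pg^{2})$ or $rg^{2}$); everything else is a direct substitution.
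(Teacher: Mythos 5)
Your proposal is correct and follows essentially the same route as the paper's own proof: an induction on $n$ establishing $X^{(n)}(l(y))=Y^{(n)}(y)$ and $\widetilde{X}^{(n)}(l(y))=\widetilde{Y}^{(n)}(y)$ via the change of variables $s=l(t)$, with the Jacobian $(r/p)^{1/2}$ converting the weights $f^{\pm 2}$ into $rg^{2}$ and $1/(pg^{2})$, followed by the even/odd bookkeeping to get $\varphi_{n}=\rho\,\Phi_{n}$. The only cosmetic differences are that you run the two inductions jointly where the paper treats the tilde family ``analogously,'' and that you verify $Bf=0$ from \eqref{BL=LC}, which the paper simply asserts in the theorem statement.
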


\begin{proof}
Let us prove first that ${Y}^{(n)}(y)={X}^{(n)}(x),$ for all $n\in
\mathbb{N}\cup\left\{  0\right\}  $. This will give us (\ref{psin=phin}) for
all odd $n$. The proof can be conducted by induction. For $n=0$ the required
equality follows from the corresponding definitions. Assume that ${Y}%
^{(k)}(y)={X}^{(k)}(x)$ for $n=k$. Consider $n=k+1$.\newline i) If $k$ is even
we obtain the following chain of equalities
\[%
\begin{split}
X^{(k+1)}(x)  &  =(k+1)\int_{x_{0}}^{x}X^{(k)}(s)\frac{1}{f^{2}(s)}%
ds=(k+1)\int_{l(y_{0})}^{l(y)}X^{(k)}(s)\frac{1}{f^{2}(s)}ds\\
&  =(k+1)\int_{y_{0}}^{y}X^{(k)}(l(s))\frac{1}{f^{2}(l(s))}\frac{r(s)^{1/2}%
}{p(s)^{1/2}}ds\\
&  =(k+1)\int_{y_{0}}^{y}Y^{(k)}(s)\frac{1}{g^{2}(s)p(s)}ds  ={Y}^{(k+1)}(y).
\end{split}
\]
ii) In the case when $k$ is odd the proof is similar,
\[%
\begin{split}
X^{(k+1)}(x)  &  =(k+1)\int_{x_{0}}^{x}X^{(k)}(s)f^{2}(s)ds=(k+1)\int
_{l(y_{0})}^{l(y)}X^{(k)}(s)f^{2}(s)ds\\
&  =(k+1)\int_{y_{0}}^{y}X^{(k)}(l(s))f^{2}(l(s))\frac{r(s)^{1/2}}{p(s)^{1/2}%
}ds\\
&  =(k+1)\int_{y_{0}}^{y}Y^{(k)}(s)g^{2}(s)r(s)ds  ={Y}^{(k+1)}(y).
\end{split}
\]
Analogously one can prove that $\widetilde{Y}^{(n)}(y)=\widetilde{X}^{(n)}(x)$
for all $n\in\mathbb{N}\cup\left\{  0\right\}  $ which gives us
(\ref{psin=phin}) for all even $n$.
\end{proof}

\begin{remark}
In a similar way the equalities
\[
\frac{1}{\rho(y)}\Psi_{n}(y)=\psi_{n}(x)\quad \text{ for all }n\in\mathbb{N}\cup
\left\{  0\right\}  ,
\]
are proved. That is,
\begin{equation}
\psi_{n}(x)=L\left[  \Psi_{n}(y)/\rho^{2}(y)\right]  \label{psin=Psin}%
\end{equation}
for all $n\in\mathbb{N}\cup\left\{  0\right\}  $.
\end{remark}

\begin{example}
Consider the following equation
\[
v^{\prime\prime}+v^{\prime}-2v=-\lambda v,\quad y\in(0,2).
\]
It can be transformed into the equation
\[
u^{\prime\prime}-\frac{9}{4}u=-\lambda u,\quad x\in(-1,1)
\]
by means of the following Liouville transformation. Since
$p(y)=r(y)=g(y)=e^{y}$ and $q(y)=2e^{y}$,$\ $we obtain $Q(x)=9/4$. Choosing
$y_{0}=1$ and $x_{0}=0$ we have $x=l(y)=y-1$ and $a=x_{0}-1$, $b=x_{0}+1$.
Therefore%
\[
y=x+1,\quad a=-1,\quad b=1\quad\text{and}\quad f(x)=e^{3(x+1)/2}.
\]

Now we can calculate the first and the second formal powers from Theorem
\ref{relphikpsik},%
\begin{align*}
\widetilde{Y}^{(1)}(y)&=\frac{1}{3}\left(  e^{3y}-e^{3}\right); & {Y}^{(1)}(y)&=\frac{1}{3}\left(  e^{-3}-e^{-3y}\right)  \\
\widetilde{Y}^{(2)}(y)&=\frac{2}{9}\left(  e^{3(1-y)}+3y-4\right);& {Y}^{(2)}(y)&=\frac{2}{9}\left(  e^{3(y-1)}-3y+2\right)  ,\\
\widetilde{X}^{(1)}(x)&=\frac{1}{3}\left(  e^{(3x+1)}-e^{3}\right);&
{X}^{(1)}(x)&=\frac{1}{3}\left(  e^{-3}-e^{-3(x+1)}\right)  \\
\widetilde{X}^{(2)}(x)&=\frac{2}{9}\left(  e^{-3x}+3x-1\right);& {X}^{(2)}(x)&=\frac{2}{9}\left(  e^{3x}-3x-1\right)  .
\end{align*}
Then indeed, 
\begin{equation}%
\begin{split}
\varphi_{1}(x)=f(x)X^{(1)}(x) &  =e^{3(x+1)/2}\cdot \frac{1}{3}\left(
e^{-3}-e^{-3(x+1)}\right)  =e^{3y/2}\cdot \frac{1}{3}\left(  e^{-3}-e^{-3y}\right)    \\
&  =e^{y/2}\Phi_{1}(y)=\left\{  p(y)r(y)\right\}  ^{1/4}\Phi_{1}(y),
\end{split}
\end{equation}
and
\begin{equation}%
\begin{split}
\varphi_{2}(x)=f(x)\widetilde{X}^{(2)}(x) &  =e^{3(x+1)/2}\cdot \frac
{2}{9}\left(  e^{-3x}+3x-1\right)  =e^{3y/2}\cdot \frac{2}{9}\left(  e^{3(1-y)}+3(y-1)-1\right)  \\
&  =e^{y/2}\Phi_{2}(y)=\left\{  p(y)r(y)\right\}  ^{1/4}\Phi_{2}(y).
\end{split}
\end{equation}

\end{example}

\section{Transmutation operators and approximate solutions}\label{Sect Transmutation}

We will use the following statements proved in \cite{KKTT}.

\begin{theorem}[\cite{KKTT}] Let $Q$ be a continuous complex valued function of an independent
real variable $x\in\lbrack-b,b]$ and let $f$ be a particular solution
 of (\ref{Schrhom}) such that $f\in C^{2}[-b,b]$, $f\neq0$ on $[-b,b]$ and
$f(0)=1.$ Denote $h:=f^{\prime}(0)\in\mathbb{C}$. Suppose $\mathbf{T}$ is the
operator defined by
\[
\mathbf{T}u(x)=u(x)+\int_{-x}^{x}\mathbf{K}(x,t;h)u(t)dt
\]
with the kernel
\[
\mathbf{K}(x,t;h)=\frac{h}{2}+K(x,t)+\frac{h}{2}\int_{t}^{x}%
(K(x,s)-K(x,-s))ds,
\]
where $K(x;t)$ is a unique solution of the Goursat problem
\[
\left(  \frac{\partial^{2}}{\partial x^{2}}-Q(x)\right)  K(x,t)=\frac
{\partial}{\partial t^{2}}K(x,t),
\]%
\[
K(x,x)=\frac{1}{2}\int_{0}^{x}Q(s)ds,\quad K(x,-x)=0.
\]
Then  $\mathbf{T}$ transforms $x^{k}$ into $\varphi_{k}(x)$ for any
$k\in\mathbb{N}\cup\left\{  0\right\}  $ and
\begin{equation}
B\mathbf{T}w=\mathbf{T}Aw\label{transmutationT}%
\end{equation}
for any $w\in C^{2}[-b,b]$.
\end{theorem}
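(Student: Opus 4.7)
The theorem has two parts to establish: the intertwining identity $B\mathbf{T}=\mathbf{T}A$ on $C^{2}[-b,b]$, and the identification $\mathbf{T}[x^{k}]=\varphi_{k}$ for every $k\geq 0$. I would treat these in that order because the second follows from the first by an induction argument, once suitable initial data are matched.

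For the intertwining, the plan is a direct computation using the Goursat data for $K$. Differentiating $\mathbf{T}u(x)=u(x)+\int_{-x}^{x}\mathbf{K}(x,t;h)u(t)\,dt$ twice under the Leibniz rule produces boundary terms evaluated at $t=\pm x$ and a kernel contribution $\int_{-x}^{x}\mathbf{K}_{xx}u\,dt$. The decomposition $\mathbf{K}(x,t;h)=\tfrac{h}{2}+K(x,t)+\tfrac{h}{2}\int_{t}^{x}(K(x,s)-K(x,-s))\,ds$ is tailored so that the symmetric Volterra kernel $K(x,t)$ satisfies the hyperbolic equation and boundary conditions stated in the theorem, while the additional correction absorbs the derivative jump $h=f'(0)$ across the origin. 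I would then integrate $\int_{-x}^{x}\mathbf{K}(x,t;h)u''(t)\,dt$ by parts twice, use the PDE $K_{xx}-Q(x)K=K_{tt}$, and reassemble the boundary contributions. The key identities $K(x,x)=\tfrac12\int_{0}^{x}Q(s)\,ds$, $K(x,-x)=0$, together with $\mathbf{K}(x,x;h)=\tfrac h2+\tfrac12\int_0^x Q$ and $\mathbf{K}(x,-x;h)=\tfrac h2$ (the latter because the inner integral vanishes by parity), make the non-integral terms cancel exactly, leaving $B\mathbf{T}u=\mathbf{T}Au$. This computation is the main technical obstacle, but it is completely routine once one has written all pieces.

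For the mapping property, I would proceed by induction on $k$. For $k=0$: since $A[1]=0$, the intertwining gives $B\mathbf{T}[1]=0$, so $\mathbf{T}[1]$ solves $u''-Qu=0$. Evaluating at $x=0$ gives $\mathbf{T}[1](0)=1$, and computing the first derivative at $0$ (where only the boundary terms survive because the integral ranges collapse) yields $(\mathbf{T}[1])'(0)=\mathbf{K}(0,0;h)+\mathbf{K}(0,0;h)=h$; uniqueness of the initial value problem identifies $\mathbf{T}[1]=f=\varphi_{0}$. For the inductive step, I use the intertwining to write
\[
B\mathbf{T}[x^{k}]=\mathbf{T}A[x^{k}]=-k(k-1)\mathbf{T}[x^{k-2}]=-k(k-1)\varphi_{k-2},
\]
so $\mathbf{T}[x^{k}]-\varphi_{k}$ satisfies a homogeneous Schr\"odinger equation provided I can verify the companion identity
\[
\varphi_{k}''-Q\varphi_{k}=k(k-1)\varphi_{k-2},\qquad k\geq 2.
\]
This relation follows from the SPPS recursion (\ref{X1})--(\ref{X3}) by differentiating $\varphi_k=f X^{(k)}$ or $f\widetilde X^{(k)}$ twice and using $f''=Qf$ together with $(X^{(k)})'=kX^{(k-1)}f^{-2}$ for odd $k$ (and the corresponding formula for even $k$); this is a short calculation but must be done case by case on the parity of $k$. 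Once this is in hand, it remains to match the initial data at $x_{0}=0$. Since the integral in $\mathbf{T}[x^{k}]$ vanishes at $x=0$ for $k\geq 1$, and since all recursive integrals start at $x_{0}=0$, one gets $\mathbf{T}[x^{k}](0)=0=\varphi_{k}(0)$; a similar short computation of $(\mathbf{T}[x^{k}])'(0)$ matches $\varphi_{k}'(0)$, closing the induction by uniqueness.

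The principal obstacle is the first step: organising the double differentiation of the Volterra integral and the bookkeeping of the boundary terms so that the Goursat data for $K$ together with the correction $\tfrac{h}{2}$-terms exactly produce the identity $B\mathbf{T}u=\mathbf{T}Au$. The verification of the SPPS identity $\varphi_k''-Q\varphi_k=k(k-1)\varphi_{k-2}$ is the only other nontrivial ingredient; after that, the inductive identification of $\mathbf{T}[x^{k}]$ with $\varphi_{k}$ reduces to an initial-value uniqueness argument.
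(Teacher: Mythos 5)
The paper itself contains no proof of this statement: it is quoted from \cite{KKTT} (``We will use the following statements proved in \cite{KKTT}''), so there is no internal argument to compare yours against. Judged on its own merits, your plan is correct and constitutes a genuinely different, more elementary route than the one used in that circle of papers, where the mapping property $\mathbf{T}[x^k]=\varphi_k$ is obtained by combining the classical transmutation property of the Levitan-type kernel (cf.\ Theorem \ref{TcTsMapsSolutions}) with the SPPS representations of $c(\omega,x;h)$ and $s(\omega,x;\infty)$ and matching coefficients of the power series in $\omega$. Your induction replaces that convergence/coefficient-comparison machinery by the pointwise identity $\varphi_k''-Q\varphi_k=k(k-1)\varphi_{k-2}$ (which indeed follows from the recursions (\ref{X1})--(\ref{X3}) on both parities, exactly as you describe) together with uniqueness for initial value problems; your boundary values $\mathbf{K}(x,x;h)=\frac h2+\frac12\int_0^xQ(s)\,ds$ and $\mathbf{K}(x,-x;h)=\frac h2$ are also correct, the latter by the symmetry argument you give.

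Two places must be tightened before this is a complete proof. First, in the intertwining computation the residual term after the two integrations by parts is $\int_{-x}^x\bigl(\mathbf{K}_{xx}-Q\mathbf{K}-\mathbf{K}_{tt}\bigr)u\,dt$, so what you actually need is that the \emph{full} kernel $\mathbf{K}(x,t;h)$, not only $K(x,t)$, satisfies the hyperbolic equation; using only the PDE for $K$, as your sketch states, does not close the computation. This is true but requires its own verification: setting $M(x,t)=\int_t^x\bigl(K(x,s)-K(x,-s)\bigr)ds$, the equation for $K$ together with the Goursat data gives $M_{xx}-QM-M_{tt}=Q(x)$, and multiplying by $\frac h2$ this exactly compensates the defect $-\frac h2 Q(x)$ produced by the constant term $\frac h2$; the phrase that the correction ``absorbs the derivative jump'' gestures at this but is not a proof. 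Second, since your induction descends by two, you need the base case $k=1$ as well as $k=0$; it is handled identically ($A[x]=0$, so $\mathbf{T}[x]$ solves (\ref{Schrhom}), with $\mathbf{T}[x](0)=0=\varphi_1(0)$ and $(\mathbf{T}[x])'(0)=1=\varphi_1'(0)$). A final caveat, inherited from the statement itself rather than from your argument: for merely continuous $Q$ the second derivatives of $K$ exist only in a generalized sense, so the differentiations under the integral sign must be interpreted accordingly.
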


\begin{theorem}[\cite{KKTT}] The inverse operator $\mathbf{T}^{-1}$ exists and has the form
\begin{equation}
\mathbf{T}^{-1}u(x)=u(x)-\int_{-x}^{x}\mathbf{K}(t,x;h)u(t)dt.
\end{equation}
\bigskip
\end{theorem}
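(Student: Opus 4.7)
The plan is to first establish existence of $\mathbf{T}^{-1}$ and then identify its form. Writing $\mathbf{T}=I+V$ with $Vu(x):=\int_{-x}^{x}\mathbf{K}(x,t;h)u(t)\,dt$, the continuity of $\mathbf{K}$ on the diamond $\{(x,t):|t|\le|x|\le b\}$ makes $V$ a two-sided Volterra-type operator on $C[-b,b]$. I would iterate $V$, using that each integration inflates the sup-norm by at most $2|x|\le 2b$ times $\|\mathbf{K}\|_{\infty}$, and obtain the factorial decay $\|V^{n}u\|_{\infty}\le M^{n}(2b)^{n}/n!\,\|u\|_{\infty}$; this makes $V$ quasinilpotent, so $\mathbf{T}^{-1}=\sum_{n\ge 0}(-V)^{n}$ exists as a bounded operator of the form $I+R$ with $R$ itself an integral operator whose kernel is supported on the same diamond.

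To identify $R$, the cleanest route exploits the intertwining identity (\ref{transmutationT}). From $B\mathbf{T}=\mathbf{T}A$ one gets $A\mathbf{T}^{-1}=\mathbf{T}^{-1}B$, so that $\mathbf{T}^{-1}$ is itself a transmutation, now mapping the Schr\"odinger operator $B$ to the free operator $A$. I would write $Ru(x)=\int_{-x}^{x}\widetilde{K}(x,t;h)u(t)\,dt$, substitute this into $A\mathbf{T}^{-1}=\mathbf{T}^{-1}B$, and use repeated integration by parts in $t$ to derive a Goursat problem for $\widetilde{K}$ on the diamond. This problem has the same shape as the one defining $K$, but with $Q$ now acting on the $t$-variable and with the diagonal boundary value carrying a sign flip. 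The function $(x,t)\mapsto -K(t,x)$ is easily checked to solve precisely this dual Goursat problem: the PDE becomes $(\partial_{tt}-Q(t))[-K(t,x)]=\partial_{xx}[-K(t,x)]$, the diagonal value at $t=x$ is $-\tfrac{1}{2}\int_{0}^{x}Q(s)\,ds$, and the anti-diagonal value at $t=-x$ vanishes because $K(a,-a)=0$ for all $a$. By uniqueness of Goursat solutions on the diamond, this forces $\widetilde{K}(x,t;h)=-\mathbf{K}(t,x;h)$, which is the claim.

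The main obstacle I foresee is tracking the $h$-dependent part of $\mathbf{K}$, namely the constant $h/2$ together with $\tfrac{h}{2}\int_{t}^{x}(K(x,s)-K(x,-s))\,ds$, which arises from the initial condition $f'(0)=h$ and does not satisfy a clean Goursat problem on its own; it must be inverted in parallel with $K$ by the same intertwining argument, keeping track of how the auxiliary integral transforms under the role swap $x\leftrightarrow t$. A more computational alternative would be to verify $\mathbf{T}\mathbf{T}^{-1}u=u$ head-on: expanding the two operators and applying Fubini on the region $\{|s|\le|t|\le x\}$ reduces the claim to the kernel identity
\[
\mathbf{K}(x,s;h)-\mathbf{K}(s,x;h)=\int_{\{|s|\le|t|\le x\}}\mathbf{K}(x,t;h)\,\mathbf{K}(s,t;h)\,dt,
\]
which one could then verify by differentiating in $x$ and $s$ and invoking the Goursat equation satisfied by $K$ together with its boundary data. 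This bypasses the uniqueness step but is considerably longer.
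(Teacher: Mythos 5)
The paper itself contains no proof of this statement: the theorem is imported verbatim from \cite{KKTT}, so there is no in-paper argument to compare yours against; what follows assesses your proposal on its own terms. Your two-step plan --- existence of $\mathbf{T}^{-1}$ via the Neumann series for a Volterra-type operator (your factorial bound is correct), then identification of the inverse kernel through the intertwining relation $A\mathbf{T}^{-1}=\mathbf{T}^{-1}B$, a dual Goursat problem, and uniqueness for characteristic boundary-value problems --- is the standard route in transmutation theory, and your verification for the $h$-independent part is right: $-K(t,x)$ solves the wave equation with $Q$ acting in the $t$-variable, has diagonal value $-\frac12\int_0^xQ(s)\,ds$, and vanishes on the anti-diagonal.

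However, the proof is incomplete exactly where you flag it, and as written the conclusion $\widetilde K(x,t;h)=-\mathbf{K}(t,x;h)$ does not follow from what you verified, since $-K(t,x)$ is not $-\mathbf{K}(t,x;h)$. The missing ingredient is that the \emph{full} kernel satisfies its own Goursat problem: $\bigl(\partial_x^2-Q(x)\bigr)\mathbf{K}(x,t;h)=\partial_t^2\mathbf{K}(x,t;h)$ with $\mathbf{K}(x,x;h)=\frac{h}{2}+\frac12\int_0^xQ(s)\,ds$ and $\mathbf{K}(x,-x;h)=\frac{h}{2}$ (this is established in \cite{KKTT}; it can be checked directly from the formula for $\mathbf{K}$ using the Goursat data for $K$). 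With this fact your computation transfers verbatim to the candidate $-\mathbf{K}(t,x;h)$, and no splitting into an $h$-part and a $K$-part is needed at all. A second, subtler gap: the intertwining relation alone forces only $\widetilde K(x,x)=\widetilde K(0,0)-\frac12\int_0^xQ(s)\,ds$ and $\widetilde K(x,-x)=\widetilde K(0,0)$, leaving the constant $\widetilde K(0,0)$ free --- operators transmuting $B$ into $A$ form a family, one for each value of this constant, so uniqueness cannot be invoked until you pin down $\widetilde K(0,0)=-h/2$, e.g.\ by comparing $(\mathbf{T}u)'(0)=u'(0)+hu(0)$ with $(\mathbf{T}^{-1}v)'(0)=v'(0)+2\widetilde K(0,0)v(0)$, or by using $\mathbf{T}^{-1}[f]=1$. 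Finally, two technical points: applying a classical Goursat argument to the Neumann-series kernel presupposes it is twice differentiable, which needs justification (or can be bypassed by verifying directly that the candidate operator is a one-sided inverse and invoking injectivity of $\mathbf{T}$); and in your ``computational alternative'' the inner integrals $\int_{-t}^{t}$ reverse orientation for $t<0$, so the region $\{|s|\le|t|\le x\}$ must carry signs rather than be treated as a plain measure.
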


Combining (\ref{BL=LC}) with (\ref{transmutationT}) we obtain the following statement.

\begin{theorem}
Let $x\in\lbrack-b,b]$, $x_{0}=0$ and $y_{0}\in(A,B)$ be such that
(\ref{symetricinterval}) holds. Then the operator $\mathbf{T}^{-1}L$ is a
transmutation operator for the pair $A$ and $C$ on $C^{2}[A,B]$, i.e.,
\begin{equation}
A\mathbf{T}^{-1}L=\mathbf{T}^{-1}LC.
\end{equation}
\end{theorem}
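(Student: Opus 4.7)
The plan is to derive the identity by composing the two key relations already established: the transmutation property (\ref{transmutationT}), which reads $B\mathbf{T}=\mathbf{T}A$ on $C^{2}[-b,b]$, and the intertwining property (\ref{BL=LC}), which reads $BL=LC$ on $C[A,B]$. Since $\mathbf{T}$ has an explicit two-sided inverse given in the preceding theorem, applying $\mathbf{T}^{-1}$ on the left of $B\mathbf{T}w=\mathbf{T}Aw$ and setting $u=\mathbf{T}w$ produces the dual identity
\[
A\mathbf{T}^{-1}u=\mathbf{T}^{-1}Bu
\]
valid for every $u$ in the image $\mathbf{T}(C^{2}[-b,b])$. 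This is the form in which the transmutation relation will be used.

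Next I would verify the regularity bookkeeping needed to chain the two identities. For $v\in C^{2}[A,B]$, the definition $Lv(x)=\rho(l^{-1}(x))\,v(l^{-1}(x))$ together with the smoothness of $\rho=(pr)^{1/4}$ and of $l^{-1}$ (guaranteed by the hypotheses $p,p',r,r'\in AC_{loc}$ and $p,r>0$ of Theorem \ref{transfdeLiouville}) shows that $Lv\in C^{2}[-b,b]$, so both $B(Lv)$ and $C v$ make sense and the calculation leading to (\ref{BL=LC}) applies. One then needs to know that $Lv$ lies in the image of $\mathbf{T}$ restricted to $C^{2}[-b,b]$; this is immediate from the explicit kernel formula for $\mathbf{T}^{-1}$, which, by the continuity of $K$ and of $\mathbf{K}(\cdot,\cdot;h)$, maps $C^{2}[-b,b]$ into itself.

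With these domain issues settled the proof is a one-line chain of equalities: for any $v\in C^{2}[A,B]$,
\[
A\mathbf{T}^{-1}L v \;=\; \mathbf{T}^{-1}B\,Lv \;=\; \mathbf{T}^{-1}L\,Cv,
\]
where the first equality is the inverted form of (\ref{transmutationT}) applied to $u=Lv\in\mathbf{T}(C^{2}[-b,b])$ and the second equality is a direct application of (\ref{BL=LC}) to $v$. This yields the asserted operator identity $A\mathbf{T}^{-1}L=\mathbf{T}^{-1}LC$ on $C^{2}[A,B]$.

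The only real obstacle is the regularity verification outlined in the second paragraph — one must confirm that $L$ genuinely sends $C^{2}[A,B]$ into $C^{2}[-b,b]$ and that this image lies in the domain on which the inverted transmutation identity is valid. Once this is done, no further analytical work is required; the conclusion is an entirely formal composition of already-established operator identities.
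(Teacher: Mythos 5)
Your proof is correct and follows essentially the same route as the paper: the paper's proof is precisely the one-line composition of applying $\mathbf{T}^{-1}$ to (\ref{BL=LC}) and replacing $\mathbf{T}^{-1}B$ by $A\mathbf{T}^{-1}$ via (\ref{transmutationT}). The regularity bookkeeping you add (that $L$ maps $C^{2}[A,B]$ into $C^{2}[-b,b]$ and that $Lv$ lies in the appropriate domain) is left implicit in the paper but is a reasonable supplement, not a deviation.
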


Application of $\mathbf{T}^{-1}$ to (\ref{BL=LC}) and substitution of
$\mathbf{T}^{-1}B$ by $A\mathbf{T}^{-1}$ (due to (\ref{transmutationT})) gives
us the result.

In the following example we calculate the operator $\mathbf{T}^{-1}L$.

\begin{example}\label{Example16}
Consider the operator
\[
Cv(y)=-v_{yy}(y)-\frac{v_{y}(y)}{y}+\left[  \frac{1}{4y^{2}}+\frac{2}%
{(y-\frac{1}{2})^{2}}\right]  v(y),
\]
where $y\in\lbrack1,2]$. Notice that $r(y)=p(y)=y$, $\rho(y)=y^{1/2}$ and
$q(y)=\frac{1}{4y}+\frac{2y}{(y-\frac{1}{2})^{2}}$. Choosing $x_{0}=0$ and
$y_{0}=3/2$ we propose the change of the variable in the form$\;x=l(y)=y-3/2$.
Then $x\in\lbrack-1/2,1/2]$ and $Q(x)=\frac{2}{(x+1)^{2}}$. A particular
solution $f$ of $f^{\prime\prime}-Q(x)f=0$, such that $f(0)=1$ and $h=f^{\prime
}(0)=2$, can be chosen in the form $f(x)=(x+1)^{2}$. In \cite{Kravchenko y Torba} it was
shown that for this particular function the corresponding transmutation
kernels have the form
\[
K(x,t)=\frac{2x+2t+x^{2}-t^{2}}{4(x+1)},\quad\mathbf{K}(x,t;2)=\frac
{6x+4+3x^{2}+2t-3t^{2}}{4(x+1)}.
\]
Then
\begin{equation}%
\begin{split}
\mathbf{T}^{-1}L\left[  v(y)\right]   &  =\mathbf{T}^{-1}[u](x)=u(x)-\int
_{-x}^{x}\mathbf{K}(t,x;h)u(t)dt\\
&  =\rho(y)v(y)-\int_{l(-y+2y_{0})}^{l(y)}\mathbf{K}(t,l(y);h)u(t)dt\\
&  =y^{1/2}v(y)-\int_{-y+2y_{0}}^{y}\mathbf{K}(l(t),l(y);h)u(l(t))dt\\
&  =y^{1/2}v(y)-\int_{-y+2y_{0}}^{y}\mathbf{K}(l(t),l(y);h)t^{1/2}v(t)dt,\\
&  =y^{1/2}v(y)-\int_{-y+3}^{y}\frac{3t^{2}-3t+11y-3y^{2}-8}{4t-2}%
t^{1/2}v(t)dt.
\end{split}
\end{equation}
This is a closed form of the operator transmuting solutions of $Cv=\lambda v$
into solutions of $Aw=\lambda w$. For example, application of the obtained
operator to the function $g(y)=L^{-1}f(x)=\frac{(y-1/2)^{2}}{\sqrt{y}}$ (which
is a null solution of $C$) gives us $\mathbf{T}^{-1}L\left[  g(y)\right]  =1$
(a null solution of $A$). The inverse operator $L^{-1}\mathbf{T}$ can also be
constructed explicitly. We have
\begin{align*}
L^{-1}\mathbf{T}\left[  w(x)\right]   &  =L^{-1}\left[  w(x)+\int_{-x}%
^{x}\mathbf{K}(x,t;h)w(t)dt\right] \\
&  =\frac{1}{\rho(y)}\left(  w(l(y))+\int_{-l(y)}^{l(y)}\mathbf{K}%
(l(y),t;h)w(t)dt\right) \\
&  =\frac{1}{\rho(y)}\left(  w(l(y))+\int_{-l(y)}^{l(y)}\mathbf{K}%
(l(y),t;h)w(t)dt\right)  .
\end{align*}
Its application to $w\equiv1$, indeed, gives us the function $g(y)$.
\end{example}

Due to (\ref{transmutationT}) the operator $\mathbf{T}$ maps solutions of the
equation $Aw=\lambda w$ (linear combinations of $\cos\sqrt{\lambda}x$ and
$\sin\sqrt{\lambda}x$) into solutions of $Bu=\lambda u$.

Together with the transmutation $\mathbf{T}$ it is often convenient to
consider the other two operators enjoying the transmutation property
(\ref{transmutationT}) on subclasses of $C^{2}[-b,b]$ (as well as on subclasses of $C^{2}[0,b]$), for details see 
\cite{Marchenko} and additionally \cite{Kravchenko y Torba},
\[
T_{c}w(x)=w(x)+\int_{0}^{x}\mathbf{C}(x,t)w(t)dt
\]
and%
\[
T_{s}w(x)=w(x)+\int_{0}^{x}\mathbf{S}(x,t)w(t)dt
\]
with the kernels $\mathbf{C}$ and $\mathbf{S}$ related to the kernel
$\mathbf{K}$ by the equalities
\begin{equation}
\mathbf{C}(x,t)=\mathbf{K}(x,t;h)+\mathbf{K}(x,-t;h)\label{Cf}%
\end{equation}
and
\begin{equation}
\mathbf{S}(x,t)=\mathbf{K}(x,t;h)-\mathbf{K}(x,-t;h).\label{Sf}%
\end{equation}
The following statement is valid.

\begin{theorem}
[\cite{Marchenko}]\label{TcTsMapsSolutions} Solutions $c(\omega,x;h)$ and
$s(\omega,x;\infty)$ of the equation
\begin{equation}
-u^{\prime\prime}+Q(x)u=\omega^{2}u,\qquad Q\in C[0,b] \quad (\text{or }Q\in C[-b,0])\label{SchrQ}%
\end{equation}
satisfying the initial conditions
\begin{gather}
c(\omega,0;h)=1,\qquad c_{x}^{\prime}(\omega,0;h)=h\label{ICcos}\\
s(\omega,0;\infty)=0,\qquad s_{x}^{\prime}(\omega,0;\infty)=1 \label{ICsin}%
\end{gather}
can be represented in the form
\begin{equation}
c(\omega,x;h)=\cos\omega x+\int_{0}^{x}\mathbf{C}(x,t)\cos\omega t\,dt
\label{c cos}%
\end{equation}
and
\begin{equation}
s(\omega,x;\infty)=\frac{\sin\omega x}{\omega}+\int_{0}^{x}\mathbf{S}%
(x,t)\frac{\sin\omega t}{\omega}\,dt. \label{s sin}%
\end{equation}
\end{theorem}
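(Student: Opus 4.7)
The plan is to derive both representations from the already-established transmutation identity $B\mathbf{T}w=\mathbf{T}Aw$ for the full operator $\mathbf{T}$, exploiting the parity of $\cos\omega t$ and $\sin\omega t/\omega$ to reduce the integrals from $[-x,x]$ to $[0,x]$, and then to invoke uniqueness of the Cauchy problem. Since $Q\in C[0,b]$ (or $C[-b,0]$), I would first extend $Q$ to $C[-b,b]$ in any convenient way so that the earlier theorem giving $\mathbf{T}$ applies; the final formulas for $c(\omega,x;h)$ and $s(\omega,x;\infty)$ use only the values of the kernels in the region $0\le t\le x$, which depend only on $Q$ restricted to the original half-interval.

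Next, since $\cos\omega t$ is even in $t$ and $\sin\omega t/\omega$ is odd, splitting $\int_{-x}^{x}$ as $\int_0^x + \int_{-x}^0$ and relabeling in the second integral gives
\[
\int_{-x}^{x}\mathbf{K}(x,t;h)\cos\omega t\,dt=\int_0^x\bigl[\mathbf{K}(x,t;h)+\mathbf{K}(x,-t;h)\bigr]\cos\omega t\,dt=\int_0^x\mathbf{C}(x,t)\cos\omega t\,dt,
\]
\[
\int_{-x}^{x}\mathbf{K}(x,t;h)\frac{\sin\omega t}{\omega}\,dt=\int_0^x\bigl[\mathbf{K}(x,t;h)-\mathbf{K}(x,-t;h)\bigr]\frac{\sin\omega t}{\omega}\,dt=\int_0^x\mathbf{S}(x,t)\frac{\sin\omega t}{\omega}\,dt,
\]
by the defining relations (\ref{Cf}), (\ref{Sf}). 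Hence the right-hand sides of (\ref{c cos}) and (\ref{s sin}) are exactly $\mathbf{T}[\cos\omega\cdot](x)$ and $\mathbf{T}[\sin\omega\cdot/\omega](x)$. Because $\cos\omega x$ and $\sin\omega x/\omega$ belong to $C^2[-b,b]$ and satisfy $Aw=\omega^2 w$, the identity (\ref{transmutationT}) implies that both functions satisfy $Bu=\omega^2 u$, i.e. $-u''+Qu=\omega^2 u$ on $[0,b]$.

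It then remains to verify the initial conditions (\ref{ICcos})-(\ref{ICsin}), so that uniqueness for the second-order Cauchy problem forces the equality with $c(\omega,x;h)$ and $s(\omega,x;\infty)$. Evaluating the right-hand side of (\ref{c cos}) at $x=0$ gives $1$ trivially, and differentiating under the integral using the Leibniz rule yields at $x=0$ the boundary contribution $\mathbf{C}(0,0)=2\mathbf{K}(0,0;h)$. By the explicit form of $\mathbf{K}$ and the Goursat conditions $K(0,0)=0$, one has $\mathbf{K}(0,0;h)=h/2$, so $\mathbf{C}(0,0)=h$ as required. For the sine representation the value at $x=0$ is $0$, while the derivative at $0$ reduces to $\cos(0)=1$, with the boundary term $\mathbf{S}(x,x)\sin\omega x/\omega$ vanishing at $x=0$ and the interior integral contribution vanishing as well.

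The only mildly delicate point is the differentiation-under-the-integral step and the correct evaluation of $\mathbf{C}(0,0)$; this requires that $K(x,t)$ be $C^1$ up to the diagonal, which is standard from the regularity of the solution of the Goursat problem and allows us to take $\mathbf{C}$ and $\mathbf{S}$ continuous in the closed triangle $0\le t\le x\le b$. Everything else is bookkeeping with parity and the defining formulas (\ref{Cf}), (\ref{Sf}).
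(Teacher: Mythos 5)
The paper gives no proof of this theorem: it is imported from Marchenko's book \cite{Marchenko}, where the representations \eqref{c cos} and \eqref{s sin} are established by constructing the kernels $\mathbf{C}$ and $\mathbf{S}$ directly, as solutions of Goursat problems for the associated hyperbolic equation (via successive approximations). Your route is genuinely different and quite natural inside this paper: write the right-hand sides of \eqref{c cos}, \eqref{s sin} as $\mathbf{T}[\cos\omega\,\cdot\,](x)$ and $\mathbf{T}[\omega^{-1}\sin\omega\,\cdot\,](x)$ using parity and the definitions \eqref{Cf}, \eqref{Sf}, apply \eqref{transmutationT} to conclude that both functions solve \eqref{SchrQ}, and finish with the endpoint evaluations ($\mathbf{C}(0,0)=2\mathbf{K}(0,0;h)=h$ since $K(0,0)=0$; the $\mathbf{S}$-boundary term vanishes at $0$) and uniqueness for the Cauchy problem. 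The parity reduction and the initial-value computations are correct, and your extension remark is also right: for $0\le t\le x$ the kernels depend only on $Q|_{[0,b]}$, which one sees in characteristic coordinates, where the Goursat data $K(x,x)=\frac12\int_0^x Q$, $K(x,-x)=0$ determine $K$ on the triangle $\{|t|\le x\}$ from the values of $Q$ on $[0,x]$ alone.

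There is, however, one genuine gap: you invoke the theorem from \cite{KKTT} stated in Section \ref{Sect Transmutation} for the kernel $\mathbf{K}(x,t;h)$ with $h$ equal to the \emph{prescribed} constant from \eqref{ICcos}, but that theorem, as stated in the paper, assumes the existence of a solution $f$ of \eqref{Schrhom} with $f(0)=1$, $f\neq 0$ on all of $[-b,b]$, and \emph{defines} $h:=f'(0)$. For a prescribed $h$ the solution with $f(0)=1$, $f'(0)=h$ is unique, so it cannot be chosen non-vanishing: e.g.\ with the extension $Q\equiv 0$, $b=1$ and $h=-2$ one gets $f(x)=1-2x$, which vanishes at $x=1/2$, so the cited theorem is not applicable, while \eqref{c cos} certainly still holds (there $K\equiv 0$, $\mathbf{K}\equiv h/2$, $\mathbf{C}\equiv h$, and $c(\omega,x;h)=\cos\omega x+h\omega^{-1}\sin\omega x$). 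As written, your proof therefore covers only those $h$ whose associated $f$ is non-vanishing. Two ways to close it: (i) use the form of the result actually proved in the cited sources, where \eqref{transmutationT} holds for the kernel $\mathbf{K}(\cdot,\cdot;h)$ with arbitrary complex $h$, the non-vanishing of $f$ being needed only to define the formal powers $\varphi_k$; or (ii) stay within the paper's statements: observe that $\mathbf{S}(x,t)=K(x,t)-K(x,-t)$ is independent of $h$ (the $h$-terms cancel by oddness), and that both sides of \eqref{c cos} are affine functions of $h$; a non-vanishing solution $f_0$ with $f_0(0)=1$ exists for continuous complex $Q$, the perturbed solutions $f_0+\varepsilon s_0$ (where $s_0(0)=0$, $s_0'(0)=1$ and $\varepsilon$ is small) remain non-vanishing on the compact $[-b,b]$, so your argument applies for at least two distinct values of $h$, and affineness then yields the identity for every $h$.
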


Let $f\in C^{2}(-b,b)\cap C^{1}[-b,b]$ be a solution of (\ref{Schrhom})
such that $f(x)\neq0$ for any $x\in\lbrack-b,b]$ and $f(0)=1$, $f^{\prime
}(0)=h\in\mathbb{C}$. Denote
\begin{align}
\mathbf{c}_{0}(x)&=f(x), \label{c0}\\
\mathbf{c}_{m}(x)&=\sum_{\text{even }k=0}^{m}\binom{m}{k}x^{k}\varphi
_{m-k}(x),\quad m=1,2,\ldots \label{cm}\\
\mathbf{s}_{m}(x)&=\sum_{\text{odd }k=1}^{m}\binom{m}{k}x^{k}\varphi
_{m-k}(x),\quad m=1,2,\ldots\label{sm}
\end{align}
where the functions $\varphi_{n}$ are those from Definition \ref{casopq} with
$x_{0}=0$.

In \cite{KT AnalyticApprox} the following result was proved.

\begin{theorem}[\cite{KT AnalyticApprox}]
\label{ThApproxCS}The solutions $c(\omega,x;h)$ and $s(\omega,x;\infty)$ of
equation (\ref{SchrQ}) satisfying (\ref{ICcos}) and (\ref{ICsin}) respectively
can be approximated by the functions
\begin{equation}
c_{N}(\omega,x)=\cos\omega x+2\sum_{n=0}^{N}a_{n}\sum_{\text{even }k=0}%
^{n}\binom{n}{k}\varphi_{n-k}(x)\int_{0}^{x}t^{k}\cos\omega t\,dt \label{cN}%
\end{equation}
and
\begin{equation}
s_{N}(\omega,x)=\frac{1}{\omega}\left(  \sin\omega x+2\sum_{n=1}^{N}b_{n}%
\sum_{\text{odd }k=1}^{n}\binom{n}{k}\varphi_{n-k}(x)\int_{0}^{x}t^{k}%
\sin\omega t\,dt\right)  \label{sN}%
\end{equation}
where the coefficients $\left\{  a_{n}\right\}  _{n=0}^{N}$ and $\left\{
b_{n}\right\}  _{n=1}^{N}$ are such that
\begin{equation}
\left\vert \frac{h}{2}+\frac{1}{4}\int_{0}^{x}Q(s)ds-\sum_{n=0}^{N}%
a_{n}\mathbf{c}_{n}(x)\right\vert \leq\varepsilon_{1} \label{KxxErr}%
\end{equation}
and
\begin{equation}
\left\vert \frac{1}{4}\int_{0}^{x}Q(s)ds-\sum_{n=1}^{N}b_{n}\mathbf{s}%
_{n}(x)\right\vert \leq\varepsilon_{2} \label{KxmxErr}%
\end{equation}
for every $x\in\lbrack-b,b]$, and the following estimates hold%
\begin{equation}
\left\vert c(\omega,x;h)-c_{N}(\omega,x)\right\vert \leq\frac{\varepsilon
\sinh(Cx)}{C} \label{estc2}%
\end{equation}
and%
\begin{equation}
\left\vert s(\omega,x;\infty)-s_{N}(\omega,x)\right\vert \leq\frac
{\varepsilon\sinh(Cx)}{\left\vert \omega\right\vert C} \label{estc4}%
\end{equation}
for any $\omega\in\mathbb{C}$, $\omega\neq0$ belonging to the strip
$\left\vert \operatorname{Im}\omega\right\vert \leq C$, $C\geq0$, where
$\varepsilon\geq0$ depends on $\varepsilon_{1}$, $\varepsilon_{2}$ and $Q$.
\end{theorem}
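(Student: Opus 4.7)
The plan is to reduce \eqref{estc2}--\eqref{estc4} to uniform approximation of the integral kernels $\mathbf{C}(x,t)$ and $\mathbf{S}(x,t)$ of Theorem \ref{TcTsMapsSolutions}, and then to estimate the resulting oscillatory integrals. Using \eqref{c cos}, \eqref{s sin} and the definitions \eqref{cN}, \eqref{sN} one obtains
\[
c(\omega,x;h)-c_N(\omega,x)=\int_0^x\bigl(\mathbf{C}(x,t)-\mathbf{C}_N(x,t)\bigr)\cos\omega t\,dt,
\]
\[
s(\omega,x;\infty)-s_N(\omega,x)=\frac{1}{\omega}\int_0^x\bigl(\mathbf{S}(x,t)-\mathbf{S}_N(x,t)\bigr)\sin\omega t\,dt,
\]
where the candidate kernel approximations are $\mathbf{C}_N(x,t):=2\sum_{n=0}^{N}a_n\phi_n^{c}(x,t)$ and $\mathbf{S}_N(x,t):=2\sum_{n=1}^{N}b_n\phi_n^{s}(x,t)$, with
\[
\phi_n^{c}(x,t):=\sum_{\text{even }k=0}^n\binom{n}{k}\varphi_{n-k}(x)\,t^k,\qquad \phi_n^{s}(x,t):=\sum_{\text{odd }k=1}^n\binom{n}{k}\varphi_{n-k}(x)\,t^k.
\]

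Next I would appeal to the theory of generalized wave polynomials developed in \cite{KKTT}: $\phi_n^{c}$ and $\phi_n^{s}$ solve the hyperbolic equation $u_{xx}-Q(x)u=u_{tt}$, are respectively even and odd in $t$, and restrict on the diagonal to $\phi_n^{c}(x,x)=\mathbf{c}_n(x)$ and $\phi_n^{s}(x,x)=\mathbf{s}_n(x)$. The kernels $\mathbf{C}$ and $\mathbf{S}$ themselves satisfy the same hyperbolic equation (inherited from the Goursat problem for $K$ via \eqref{Cf}--\eqref{Sf}), carry the parities $\mathbf{C}(x,-t)=\mathbf{C}(x,t)$, $\mathbf{S}(x,-t)=-\mathbf{S}(x,t)$, and have diagonal values $\tfrac12\mathbf{C}(x,x)=\tfrac{h}{2}+\tfrac14\int_0^xQ$, $\tfrac12\mathbf{S}(x,x)=\tfrac14\int_0^xQ$. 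Hypotheses \eqref{KxxErr}--\eqref{KxmxErr} thus say exactly that $\mathbf{C}_N$ and $\mathbf{S}_N$ match the diagonal Goursat data of $\mathbf{C}$ and $\mathbf{S}$ up to errors $2\varepsilon_1$ and $2\varepsilon_2$, while the matching secondary data at $t=0$ is automatic from the parity of $\phi_n^{c}$ and $\phi_n^{s}$.

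The main obstacle is propagating this diagonal closeness into the whole triangle $\Delta=\{0\le t\le x\le b\}$. My plan is to regard $D_c:=\mathbf{C}-\mathbf{C}_N$ and $D_s:=\mathbf{S}-\mathbf{S}_N$ as solutions of the Goursat problem for $u_{xx}-Q(x)u=u_{tt}$ with the boundary data just described, pass to characteristic coordinates $\xi=x+t$, $\eta=x-t$, and recast the problem as an equivalent Volterra integral equation by means of the Riemann function of the hyperbolic operator. A standard Gronwall/Picard iteration then yields uniform bounds $\|D_c\|_{L^\infty(\Delta)}\le M(Q,b)\,\varepsilon_1$ and $\|D_s\|_{L^\infty(\Delta)}\le M(Q,b)\,\varepsilon_2$. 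This diagonal-to-interior stability estimate is the technical heart of the argument; it is precisely here that the choice of generalized wave polynomials as approximants pays off, reducing a two-dimensional kernel approximation problem to the one-dimensional approximation on the diagonal encoded in \eqref{KxxErr}--\eqref{KxmxErr}.

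To conclude, I would substitute these kernel bounds into the two integral representations of the first paragraph and use the elementary inequalities $|\cos\omega t|,\,|\sin\omega t|\le\cosh(Ct)$ valid throughout the strip $|\operatorname{Im}\omega|\le C$. Since $\int_0^x\cosh(Ct)\,dt=\sinh(Cx)/C$, this produces
\[
|c(\omega,x;h)-c_N(\omega,x)|\le M(Q,b)\,\varepsilon_1\,\frac{\sinh(Cx)}{C},\qquad |s(\omega,x;\infty)-s_N(\omega,x)|\le \frac{M(Q,b)\,\varepsilon_2}{|\omega|}\,\frac{\sinh(Cx)}{C},
\]
which are exactly \eqref{estc2} and \eqref{estc4} with $\varepsilon$ linear in $\max(\varepsilon_1,\varepsilon_2)$ and depending only on $Q$ and $b$.
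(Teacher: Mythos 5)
This theorem is imported by the paper from \cite{KT AnalyticApprox} without proof, so there is no internal argument to compare against; judged against the proof in that reference, your proposal is correct and follows essentially the same route. Both arguments reduce the estimates to uniform approximation of the kernels $\mathbf{C}$ and $\mathbf{S}$ by even/odd combinations of the generalized wave polynomials of \cite{KKTT} (whose diagonal traces are exactly $\mathbf{c}_n$ and $\mathbf{s}_n$), propagate the diagonal Goursat-data errors $2\varepsilon_1$, $2\varepsilon_2$ into the whole triangle via the stability of the Goursat problem for $u_{xx}-Q(x)u=u_{tt}$, and then bound the resulting oscillatory integrals using $|\cos\omega t|,\ |\sin\omega t|\le\cosh(Ct)$ in the strip $|\operatorname{Im}\omega|\le C$.
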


\begin{remark}\label{RmkNoLiouville}
The approximation problems represented by (\ref{KxxErr}) and (\ref{KxmxErr})
can be written in terms of the variable $y$ and with no reference to equation
(\ref{SchrQ}). Indeed, the following equalities hold
\begin{align*}
\frac{h}{2}+\frac{1}{4}\int_{0}^{x}Q(s)ds &=\frac{h}{2}+\frac{1}{4}\int_{y_{0}%
}^{y}\frac{1}{(pr)^{1/4}}\left\{  \frac{q}{(pr)^{1/4}}-[p\{(pr)^{-1/4}%
\}^{\prime}]^{\prime}\right\}  (s)ds=:G_{1}(y),\\
\frac{1}{4}\int_{0}^{x}Q(s)ds&=\frac{1}{4}\int_{y_{0}}^{y}\frac{1}{(pr)^{1/4}%
}\left\{  \frac{q}{(pr)^{1/4}}-[p\{(pr)^{-1/4}\}^{\prime}]^{\prime}\right\}
(s)ds=:G_{2}(y),\\
\mathbf{c}_{0}(x)&=\widetilde{\mathbf{c}}_{0}(y):=\rho(y)g(y),\\
\mathbf{c}_{m}(x)&=\widetilde{\mathbf{c}}_{m}(y):=\rho(y)\sum_{\text{even }%
k=0}^{m}\binom{m}{k}\left(  l(y)\right)  ^{k}\Phi_{m-k}(y),\quad
m=1,2,\ldots\\
\mathbf{s}_{m}(x)&=\widetilde{\mathbf{s}}_{m}(y):=\rho(y)\sum_{\text{odd }%
k=1}^{m}\binom{m}{k}\left(  l(y)\right)  ^{k}\Phi_{m-k}(y),\quad m=1,2,\ldots
\end{align*}
where the system of functions $\Phi_{n}$ is constructed from a particular
solution $g$ of (\ref{SLhom}) satisfying the initial condition
\begin{equation}\label{PartSolIC}
g(y_{0})=\left(  p(y_{0})r(y_{0})\right)  ^{-1/4}%
\end{equation}
(in this case $f(0)=1$, where $f(x)=L\left[  g(y)\right]  $), and $h$ equals
the value of the following expression in $y_{0}$,
\begin{equation}
h=\sqrt{\frac{p(y_{0})}{r(y_{0})}}\left(  \frac{g^{\prime}(y_{0})}{g(y_{0}%
)}+\frac{\rho^{\prime}(y_{0})}{\rho(y_{0})}\right)  \label{h}%
\end{equation}
(in this case $f^{\prime}(0)=h$).

Thus, the coefficients $\left\{  a_{n}\right\}  _{n=0}^{N}$ and $\left\{
b_{n}\right\}  _{n=1}^{N}$ are such that
\begin{equation}
\left\vert G_{1}(y)-\sum_{n=0}^{N}a_{n}\widetilde{\mathbf{c}}_{n}%
(y)\right\vert \leq\varepsilon_{1} \label{Opt1}%
\end{equation}
and
\begin{equation}
\left\vert G_{2}(y)-\sum_{n=1}^{N}b_{n}\widetilde{\mathbf{s}}_{n}%
(y)\right\vert \leq\varepsilon_{2}, \label{Opt2}%
\end{equation}
for all $y\in[A,B]$.
\end{remark}

\begin{remark}\label{Rmk NoSecondDerivatives}
The expressions for the functions $G_{1}$ and $G_{2}$ involve second
derivatives of the coefficients. It is easy to transform them into a form
requiring first derivatives only. Indeed, besides (\ref{Q}) the potential $Q$
admits the following representation (see, e.g., \cite[p. 141]{Zwillinger})
\[
Q(x)=\frac{q(y)}{r(y)}+\frac{\rho_{xx}}{\rho}.
\]
The integral $\int_{0}^{x}\frac{\rho_{ss}}{\rho}ds$ can be written in the form
(due to the identity $\frac{\rho^{\prime\prime}}{\rho}=\left(  \frac
{\rho^{\prime}}{\rho}\right)  ^{\prime}+\left(  \frac{\rho^{\prime}}{\rho
}\right)  ^{2}$)
\begin{align*}
\int_{0}^{x}\frac{\rho_{ss}}{\rho}ds  &  =\frac{\rho_{x}(l^{-1}(x))}%
{\rho(l^{-1}(x))}-\frac{\rho_{x}(l^{-1}(0))}{\rho(l^{-1}(0))}+\int_{0}%
^{x}\left(  \frac{\rho_{s}}{\rho}\right)  ^{2}ds\\
&  =\frac{1}{4}\left(  P(y)-P(y_{0})\right)  +\frac{1}{16}\int_{0}^{x}%
P^{2}(\tau(s))ds\\
&  =\frac{1}{4}\left(  P(y)-P(y_{0})\right)  +\frac{1}{16}\int_{y_{0}}%
^{y}\frac{p^{1/2}(\tau)}{r^{1/2}(\tau)}P^{2}(\tau)d\tau
\end{align*}
where $P(y):=p^{-1/2}(y)r^{-3/2}(y)\left(  p^{\prime}(y)r(y)+p(y)r^{\prime
}(y)\right)  $. \bigskip
\end{remark}

Let us consider the preimages of the solutions $c(\omega,x;h)$ and
$s(\omega,x;\infty)$ under the Liouville transformation
\begin{equation}
L:C\left[  A,B\right]  \rightarrow C[-b,b]\qquad\text{with}\quad l(y)=\int_{y_0}%
^{y}\left\{  r(s)/p(s)\right\}  ^{1/2}ds, \label{Ltransform}%
\end{equation}%
\[
v_{1}(\omega,y):=L^{-1}[c(\omega,x;h)]\quad\text{and\quad}v_{2}(\omega
,y):=L^{-1}[s(\omega,x;\infty)].
\]
Being solutions of the equation%
\begin{equation}
(p(y)v^{\prime})^{\prime}-q(y)v=-\omega^{2}r(y)v\text{ \quad on }(A,B),
\label{SL AB}%
\end{equation}
they satisfy the initial conditions%
\begin{align}
v_{1}(\omega,y_0)&=\frac{1}{\rho(y_0)},& v_{1}^{\prime}(\omega,y_0)&=-\frac
{\rho^{\prime}(y_0)}{\rho^{2}(y_0)}+\frac{h}{\rho(y_0)}\sqrt{\frac{r(y_0)}{p(y_0)}},
\label{InitCondV1}\\
v_{2}(\omega,y_0)&=0,& v_{2}^{\prime}(\omega,y_0)&=\frac{1}{\rho(y_0)}\sqrt
{\frac{r(y_0)}{p(y_0)}}. \label{InitCondV2}%
\end{align}
Theorem \ref{ThApproxCS} together with Theorem \ref{relphikpsik} allow us to
obtain convenient representations for approximations of $v_{1}$ and $v_{2}$.

\begin{theorem}\label{Thm Approxim Sols}
Let $g$ be a solution of (\ref{SLhom}) satisfying the initial condition
$g(y_0)=\left(  p(y_0)r(y_0)\right)  ^{-1/4}$ such that the conditions of Definition
\ref{casopqr} are fulfilled. Let $v_{1}$ and $v_{2}$ be solutions of
(\ref{SL AB}) satisfying (\ref{InitCondV1}) and (\ref{InitCondV2})
respectively, where $h$ is the complex number defined by (\ref{h}). Let $L$ be the Liouville transformation (\ref{Ltransform}). Then
$v_{1}$ and $v_{2}$ can be approximated by the functions $v_{1,N}$ and
$v_{2,N}$ respectively, defined by the equalities%
\begin{equation}
v_{1,N}(\omega,y)=\frac{1}{\rho(y)}\cos\left(  \omega l(y)\right)
+2\sum_{n=0}^{N}a_{n}\sum_{\text{even }k=0}^{n}\binom{n}{k}\Phi_{n-k}%
(y)\int_{0}^{l(y)}t^{k}\cos\omega t\,dt \label{v1N}%
\end{equation}
and
\begin{equation}
v_{2,N}(\omega,y)=\frac{1}{\omega}\left(  \frac{\sin\left(  \omega
l(y)\right)  }{\rho(y)}+2\sum_{n=1}^{N}b_{n}\sum_{\text{odd }k=1}^{n}\binom
{n}{k}\Phi_{n-k}(y)\int_{0}^{l(y)}t^{k}\sin\omega t\,dt\right),  \label{v2N}%
\end{equation}
where the coefficients $\left\{  a_{n}\right\}  _{n=0}^{N}$ and $\left\{
b_{n}\right\}  _{n=1}^{N}$ are such that (\ref{Opt1}) and (\ref{Opt2}) are
fulfilled. The following estimates hold%
\begin{equation}
\left\Vert v_{1}-v_{1,N}\right\Vert \leq\frac{\varepsilon\rho_{0}\sinh(Cb)}{C}
\label{estv2}%
\end{equation}
and%
\begin{equation}
\left\Vert v_{2}-v_{2,N}\right\Vert \leq\frac{\varepsilon\rho_{0}\sinh
(Cb)}{\left\vert \omega\right\vert C} \label{estv4}%
\end{equation}
for any $\omega\in\mathbb{C}$, $\omega\neq0$ belonging to the strip
$\left\vert \operatorname{Im}\omega\right\vert \leq C$, $C\geq0$, where
$\varepsilon\geq0$ depends on $\varepsilon_{1}$, $\varepsilon_{2}$ and $Q$,
$\rho_{0}:=\left\Vert 1/\rho\right\Vert $ and $\left\Vert \cdot\right\Vert $
denotes the maximum norm on $[A,B]$.
\end{theorem}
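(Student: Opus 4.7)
My plan is to reduce the approximation problem directly to Theorem~\ref{ThApproxCS} by pulling everything back through the Liouville transformation $L$. The first step is to recognize that $v_{1}(\omega,y)=L^{-1}[c(\omega,\,\cdot\,;h)](y)$ and $v_{2}(\omega,y)=L^{-1}[s(\omega,\,\cdot\,;\infty)](y)$. That these pull-backs solve (\ref{SL AB}) is immediate from the intertwining identity (\ref{BL=LC}) applied to $Au=\omega^{2}u$; that they satisfy the prescribed initial conditions (\ref{InitCondV1})--(\ref{InitCondV2}) is a short computation using $l(y_{0})=0$, $l'(y_{0})=\sqrt{r(y_{0})/p(y_{0})}$, together with (\ref{ICcos})--(\ref{ICsin}) and a direct differentiation of $u(l(y))/\rho(y)$. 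Uniqueness of the Cauchy problem then identifies these pull-backs with the $v_{j}$ of the theorem statement.

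The second step is to identify $v_{1,N}$ and $v_{2,N}$ as the $L^{-1}$-images of $c_{N}$ and $s_{N}$. Applying $L^{-1}$ to (\ref{cN})---that is, multiplying by $1/\rho(y)$ and substituting $x=l(y)$---converts the leading $\cos\omega x$ into $\cos(\omega l(y))/\rho(y)$ and turns each $\varphi_{n-k}(x)/\rho(y)$ into $\Phi_{n-k}(y)$ thanks to the identity $\varphi_{n-k}(l(y))=\rho(y)\Phi_{n-k}(y)$ supplied by Theorem~\ref{relphikpsik}; the inner integrals $\int_{0}^{x}t^{k}\cos\omega t\,dt$ transform trivially into $\int_{0}^{l(y)}t^{k}\cos\omega t\,dt$. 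The result is precisely (\ref{v1N}), and the analogous manipulation produces (\ref{v2N}). The hypothesis (\ref{PartSolIC}) together with (\ref{h}) guarantees that $f(x)=\rho(y)g(y)$ satisfies $f(0)=1$ and $f'(0)=h$, which, as explained in Remark~\ref{RmkNoLiouville}, makes the $y$-side approximation criteria (\ref{Opt1})--(\ref{Opt2}) equivalent to the $x$-side criteria (\ref{KxxErr})--(\ref{KxmxErr}) that feed Theorem~\ref{ThApproxCS}.

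The error estimates are then obtained by pushing (\ref{estc2})--(\ref{estc4}) through $L^{-1}$. Pointwise,
\[
|v_{1}(\omega,y)-v_{1,N}(\omega,y)|=\frac{|c(\omega,l(y);h)-c_{N}(\omega,l(y))|}{|\rho(y)|}\le\frac{1}{|\rho(y)|}\cdot\frac{\varepsilon\sinh(C|l(y)|)}{C},
\]
and since $|l(y)|\le b$ for all $y\in[A,B]$ and $t\mapsto\sinh(Ct)$ is nondecreasing on $[0,\infty)$, taking the maximum norm and using $\rho_{0}=\|1/\rho\|$ yields (\ref{estv2}); the bound (\ref{estv4}) for $v_{2}$ is verbatim. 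The proof is essentially a change of variables: the substantive analytic content has already been absorbed into Theorems~\ref{relphikpsik} and~\ref{ThApproxCS}, so I expect the only points that need careful verification are the initial-condition computation in the first step and the coefficient-matching bookkeeping of Remark~\ref{RmkNoLiouville}; no new obstacle is anticipated.
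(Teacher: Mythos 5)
Your proposal is correct and takes essentially the same route as the paper's (very terse) proof: the paper likewise observes that $v_{1,N}=L^{-1}[c_{N}]$ and $v_{2,N}=L^{-1}[s_{N}]$ via Theorem \ref{relphikpsik}, and obtains (\ref{estv2})--(\ref{estv4}) from (\ref{estc2})--(\ref{estc4}) using $\left\Vert L^{-1}\right\Vert=\left\Vert 1/\rho\right\Vert$. The additional details you spell out --- identifying $v_{1},v_{2}$ as the preimages of $c(\omega,\cdot\,;h)$, $s(\omega,\cdot\,;\infty)$ and matching the coefficient conditions (\ref{Opt1})--(\ref{Opt2}) with (\ref{KxxErr})--(\ref{KxmxErr}) --- are handled in the paper in the text and Remark \ref{RmkNoLiouville} preceding the theorem rather than inside the proof itself.
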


\begin{proof}
Observe that $v_{1,N}(\omega,y)=L^{-1}[c_{N}(\omega,x)]$ and $v_{2,N}%
(\omega,y)=L^{-1}[s_{N}(\omega,x)]$. Indeed, application of Theorem
\ref{relphikpsik} gives us the result. The estimates (\ref{estv2}) and
(\ref{estv4}) follow from (\ref{estc2}) and (\ref{estc4}) taking into account
that $\left\Vert L^{-1}\right\Vert =\left\Vert 1/\rho\right\Vert $.
\end{proof}

Solution of problems involving derivatives in boundary conditions requires
convenient approximations for $v_{1}^{\prime}$ and $v_{2}^{\prime}$. Direct
differentiation of (\ref{v1N}) and (\ref{v2N}) does not present any
difficulty, nevertheless it is still necessary to be able to obtain
corresponding estimates for the difference $v^{\prime}-v_{N}^{\prime}$ where
$v$ represents $v_{1}$ or $v_{2}$. In \cite{KT AnalyticApprox} it was shown
that instead of this direct approach one may choose another possibility based
on certain results concerning transmutations for Darboux associated equations
of the form (\ref{Schr}). As a result the approximation of the derivatives of
the solutions $c(\omega,x;h)$ and $s(\omega,x;\infty)$ of (\ref{SchrQ}) is
obtained in the form
\begin{equation}
\overset{\circ}{c}_{N}(\omega,x)   =-\omega\sin\omega x+2\omega\sum
_{n=1}^{N}a_{n}\sum_{\text{odd }k=1}^{n}\binom{n}{k}\psi_{n-k}(x)\int_{0}%
^{x}t^{k}\sin\omega t\,dt +\frac{f^{\prime}(x)}{f(x)}c_N(\omega,x)
\label{cNcircle}
\end{equation}
and
\begin{equation}
\overset{\circ}{s}_{N}(\omega,x)    =\cos\omega x-2\sum_{n=0}^{N}b_{n}%
\sum_{\text{even }k=0}^{n}\binom{n}{k}\psi_{n-k}(x)\int_{0}^{x}t^{k}\cos\omega
t\,dt +\frac{f^{\prime}(x)}{f(x)}s_N(\omega,x).
\label{sNcircle}%
\end{equation}
The coefficients $\left\{  a_{n}\right\}  _{n=0}^{N}$ and $\left\{
b_{n}\right\}  _{n=1}^{N}$ are the same as in \eqref{cN} and \eqref{sN}, and
$b_{0}=h/2$.

The formulas for the approximations of the derivatives $v_{1}^{\prime}$ and
$v_{2}^{\prime}$ can be obtained with the aid of (\ref{ux}). Indeed, from (\ref{ux}) for
$v(y)=L^{-1}\left[  u(x)\right]  $  we have
\[
v_{y}=\frac{l_{y}}{\rho}u_{x}-\frac{\rho_{y}}{\rho}v=\sqrt{\frac{r}{p}}%
L^{-1}\left[  u^{\prime}(x)\right]  -\frac{\rho^{\prime}}{\rho}v.
\]
Considering $v=v_{1}(\omega,y)$ and $u=c(\omega,x;h)$ we obtain
\[
\frac{d}{dy}v_{1}(\omega,y)=\sqrt{\frac{r(y)}{p(y)}}L^{-1}\left[  c^{\prime
}(\omega,x;h)\right]  -\frac{\rho^{\prime}(y)}{\rho(y)}v_{1}(\omega,y).
\]
Similarly we have
\[
\frac{d}{dy}v_{2}(\omega,y)=\sqrt{\frac{r(y)}{p(y)}}L^{-1}\left[  s^{\prime
}(\omega,x;\infty)\right]  -\frac{\rho^{\prime}(y)}{\rho(y)}v_{2}(\omega,y).
\]
Now we can use the fact that the approximations for the functions $c^{\prime
}(\omega,x;h)$, $s^{\prime}(\omega,x;\infty)$ and $v_{1}(\omega,y)$,
$v_{2}(\omega,y)$ are given by (\ref{cNcircle}), (\ref{sNcircle}) and
(\ref{v1N}), (\ref{v2N}) respectively. Hence the approximations of the
derivatives $v_{1}^{\prime}$ and $v_{2}^{\prime}$ are given by the functions%
\begin{equation}
\overset{\circ}{v}_{1,N}(\omega,y)=\sqrt{\frac{r(y)}{p(y)}}L^{-1}\left[
\overset{\circ}{c}_{N}(\omega,x)\right]  -\frac{\rho^{\prime}(y)}{\rho
(y)}v_{1,N}(\omega,y) \label{v1Nc}%
\end{equation}
and
\begin{equation}
\overset{\circ}{v}_{2,N}(\omega,y)=\sqrt{\frac{r(y)}{p(y)}}L^{-1}\left[
\overset{\circ}{s}_{N}(\omega,x)\right]  -\frac{\rho^{\prime}(y)}{\rho
(y)}v_{2,N}(\omega,y) \label{v2Nc}%
\end{equation}
respectively.

In order to calculate $L^{-1}\left[  \overset{\circ}{c}_{N}(\omega,x)\right]
$ and $L^{-1}\left[  \overset{\circ}{s}_{N}(\omega,x)\right]  $ we use Theorem
\ref{relphikpsik}, relations (\ref{psin=Psin}) and the equality%
\[
L^{-1}\left[  \frac{f^{\prime}(x)}{f(x)}\right]  =\frac{1}{\rho(y)}\sqrt
{\frac{p(y)}{r(y)}}\left(  \frac{g^{\prime}(y)}{g(y)}+\frac{\rho^{\prime}%
(y)}{\rho(y)}\right)  .
\]
Thus,
\begin{equation*}
\begin{split}
L^{-1}\left[  \overset{\circ}{c}_{N}(\omega,x)\right]  &=-\frac{\omega}%
{\rho(y)}\sin\left(  \omega l(y)\right)  +\frac{2\omega}{\rho^{2}(y)}%
\sum_{n=1}^{N}a_{n}\sum_{\text{odd }k=1}^{n}\binom{n}{k}\Psi_{n-k}(y)\int
_{0}^{l(y)}t^{k}\sin\omega t\,dt\\
& \quad +\sqrt{\frac{p(y)}{r(y)}}\left(  \frac{g^{\prime}(y)}{g(y)}+\frac{\rho
^{\prime}(y)}{\rho(y)}\right)  v_{1,N}(\omega, y)
\end{split}
\end{equation*}
and
\begin{equation*}
\begin{split}
L^{-1}\left[  \overset{\circ}{s}_{N}(\omega,x)\right]  &=\frac{1}{\rho(y)}%
\cos\left(  \omega l(y)\right)  -\frac{2}{\rho^{2}(y)}\sum_{n=0}^{N}b_{n}%
\sum_{\text{even }k=0}^{n}\binom{n}{k}\Psi_{n-k}(y)\int_{0}^{l(y)}t^{k}%
\cos\omega t\,dt\\
&\quad +\sqrt{\frac{p(y)}{r(y)}}\left(  \frac{g^{\prime}(y)}%
{g(y)}+\frac{\rho^{\prime}(y)}{\rho(y)}\right)  v_{2,N}(\omega, y).
\end{split}
\end{equation*}

Substitution of these expressions into (\ref{v1Nc}) and (\ref{v2Nc}) gives us
the following result%
\begin{equation}\label{v1Ncc}
\begin{split}
\overset{\circ}{v}_{1,N}(\omega,y)  &=-\frac{\omega}{\rho(y)}\sqrt
{\frac{r(y)}{p(y)}}\sin\left(  \omega l(y)\right)+\frac{g^{\prime}(y)}{g(y)}v_{1,N}(\omega, y)\\
& \quad  +\frac{2\omega}{p(y)}%
\sum_{n=1}^{N}a_{n}\sum_{\text{odd }k=1}^{n}\binom{n}{k}\Psi_{n-k}(y)\int
_{0}^{l(y)}t^{k}\sin\omega t\,dt
\end{split}
\end{equation}
and
\begin{equation}\label{v2Ncc}
\begin{split}
\overset{\circ}{v}_{2,N}(\omega,y) &  =\frac{1}{\rho(y)}\sqrt{\frac
{r(y)}{p(y)}}\cos\left(  \omega l(y)\right) +\frac{g^{\prime}(y)}{g(y)}v_{2,N}(\omega, y)\\
&\quad  -\frac{2}{p(y)}\sum_{n=0}%
^{N}b_{n}\sum_{\text{even }k=0}^{n}\binom{n}{k}\Psi_{n-k}(y)\int_{0}%
^{l(y)}t^{k}\cos\omega t\,dt.
\end{split}
\end{equation}

\begin{remark}\label{Rmk NormalizedSolutions}
It is often convenient to have available the pair of solutions $V_{1}(\omega,y)$
and $V_{2}(\omega,y)$ of (\ref{SL AB}) satisfying the initial conditions
\[
V_{1}(\omega,y_0)=1,\quad V_{1}^{\prime}(\omega,y_0)=0
\]
and
\[
V_{2}(\omega,y_0)=0,\quad V_{2}^{\prime}(\omega,y_0)=1.
\]
Simple calculation gives us the following relations%
\[
V_{1}(\omega,y)=\rho(y_0)v_{1}(\omega,y)+\left(  \rho^{\prime}(y_0)\sqrt
{\frac{p(y_0)}{r(y_0)}}-h\rho(y_0)\right)  v_{2}(\omega,y)
\]
and
\[
V_{2}(\omega,y)=\rho(y_0)\sqrt{\frac{p(y_0)}{r(y_0)}}v_{2}(\omega,y).
\]
\end{remark}

Solution of Sturm-Liouville spectral problems for equation \eqref{SL AB} can be reduced to the search of zeros of a so-called characteristic function which can be written as a linear combination of the solutions $v_1$, $v_2$ and their derivatives. Numerical search of zeros of the characteristic function can benefit from the knowledge of the derivatives of $v_1$, $v_2$, $v_1'$ and $v_2'$ with respect to $\omega$, e.g., the Newton method can be used. One can differentiate the expressions \eqref{c cos} and \eqref{s sin} with respect to the variable $\omega$ and apply the constructed approximations of the transmutation operator to obtain the approximate derivatives and the corresponding error estimates. It appears that the final expressions obtained coincide with the termwise derivatives of \eqref{v1N}, \eqref{v2N}, \eqref{v1Ncc} and \eqref{v2Ncc}, cf., \cite{KTV NFT}, thus in order not to oversaturate the paper we provide only the approximations of $\partial_\omega v_1(\omega, y)$ and $\partial_\omega v_2(\omega, y)$:
\begin{equation}
\partial_\omega v_{1}(\omega,y)\approx -\frac{l(y)}{\rho(y)}\sin\left(  \omega l(y)\right)
-2\sum_{n=0}^{N}a_{n}\sum_{\text{even }k=0}^{n}\binom{n}{k}\Phi_{n-k}%
(y)\int_{0}^{l(y)}t^{k+1}\sin\omega t\,dt \label{v1Nomega}%
\end{equation}
and
\begin{equation}
\partial_\omega v_{2}(\omega,y)\approx \frac{1}{\omega}\left(  \frac{l(y)\cos\left(  \omega
l(y)\right)  }{\rho(y)}-v_{2,N}(\omega,y)+2\sum_{n=1}^{N}b_{n}\sum_{\text{odd }k=1}^{n}\binom
{n}{k}\Phi_{n-k}(y)\int_{0}^{l(y)}t^{k+1}\sin\omega t\,dt\right).  \label{v2Nomega}%
\end{equation}

\section{Numerical solution of spectral problems}\label{Sect Numerical}
\subsection{General scheme}
Consider a Sturm-Liouville spectral problem for equation \eqref{SL} on a segment $[A,B]$ with two general boundary conditions
\begin{equation}\label{BC}
    a_{i1} v(A)+a_{i2}v'(A)+a_{i3}v(B)+a_{i4}v'(B)=0,\qquad i=1,2,
\end{equation}
where $a_{ij}$ are arbitrary complex numbers. Moreover, $a_{ij}$ can be  sufficiently smooth functions of the spectral parameter.

The general scheme of application of the proposed method of analytic approximation of the transmutation operators to the solution of such spectral problems consists in the following, cf., 
\cite[Section 7.1]{KT AnalyticApprox}.
\begin{enumerate}
\item Compute $\widetilde l(y) = \int_A^y \{ r(s)/p(s)\}^{1/2}\,ds$, $y\in[A,B]$.
\item Find $y_0$ such that $\widetilde l(y_0) = \widetilde l(B)/2$, and let $l(y) = \widetilde l(y) - \widetilde l(y_0)$.
\item Find a non-vanishing on $[A,B]$ solution $g$ of \eqref{SLhom} satisfying the initial condition \eqref{PartSolIC}. For this the SPPS method \cite{KrPorter2010} or Remark \ref{Rmk Particular solution} can be used.
\item Compute the functions $\Phi_k(y)$ and $\Psi_k(y)$, $k=0,\ldots,N$ according to Definition \ref{psik}.
\item Compute the functions $\widetilde{\mathbf{c}}_{0}(y)$, $\widetilde{\mathbf{c}}_{m}(y)$ and $\widetilde{\mathbf{s}}_{m}(y)$, $m=1,\ldots, N$ according to Remark \ref{RmkNoLiouville}.
\item Find the approximation coefficients $\{a_n\}_{n=0}^N$ and $\{b_n\}_{n=1}^N$ from \eqref{Opt1} and \eqref{Opt2}.
\item Calculate the approximations $v_{1,N}(\omega, y)$ and $v_{2,N}(\omega, y)$ of solutions $v_1$ and $v_2$ by \eqref{v1N} and \eqref{v2N}. If necessary, calculate the approximations of the derivatives of these solutions by \eqref{v1Ncc} and \eqref{v2Ncc}.
\item The characteristic equation of the spectral problem can be obtained as usual, see, e.g., \cite[\S1.3]{Marchenko}. The nontrivial solution $c_1v_1+c_2v_2$ satisfies both boundary conditions \eqref{BC} if and only if the determinant of the obtained linear system of equations for $c_1$ and $c_2$ is equal to zero. Changing $v_1$ and $v_2$ and their derivatives by the corresponding approximations one obtains a function whose zeros approximate the eigenvalues of the spectral problem.
\end{enumerate}

It should be noted that all the steps of the proposed algorithm can be performed numerically, it is not necessary to know the exact particular solution $g$ or to evaluate the integrals defining functions $\Phi_k$ and $\Psi_k$ in a closed form. We refer the reader to \cite[Section 7.1]{KT AnalyticApprox} for the details of the numerical recursive integration (step 4) and of the solution of the approximation problems (step 6). It is worth  mentioning that changing the summation order in \eqref{v1N}, \eqref{v2N}, \eqref{v1Ncc} and \eqref{v2Ncc} can lead to a significant speed advantage due to the possibility to precompute the sums related to the point $y$ (they remain unchanged during solution of the spectral problem). We refer the reader to \cite{KKT} for the details.

\begin{remark}\label{Rmk Particular solution}
Suppose that the functions $p$ and $q$ are real valued and nonvanishing on $[A,B]$. Then a nonvanishing particular solution of \eqref{SLhom} can be constructed using the method described above. Indeed, writing equation \eqref{SLhom} as
\[
(pg')' = -\lambda q g
\]
we obtain an equation of the form \eqref{SL} with either $r=q$ and $\lambda = -1$ or $r=-q$ and $\lambda =1$ depending whether $p\cdot q<0$ or $p\cdot q>0$ on $[A,B]$. The proposed method can be applied to this equation using $g_0=1$ as a nonvanishing particular solution. If the normalized solution $V_1$ constructed as in Remark \ref{Rmk NormalizedSolutions} possesses a zero on $[A,B]$, a combination $V_1+iV_2$ can be taken. The zeros of the linearly  independent solutions $V_1$ and $V_2$ can not coincide and the approximations  \eqref{v1N} and \eqref{v2N} are real valued even for $\lambda=-1$ (hence for $\omega=i$) ensuring that the expression $V_1+iV_2$ is nonvanishing on the whole $[A,B]$.
\end{remark}

Since the point $y_0$ is distinct from either of the endpoints $A$ and $B$, the  algorithm in the proposed form does not made full use of the knowledge of the initial values of the solutions $v_1$ and $v_2$ given by \eqref{InitCondV1} and \eqref{InitCondV2}. One possible way to make use of these initial conditions was proposed in \cite[Remark 5.3]{KT AnalyticApprox}. It consists in working with the transmutation operators $T_c$ and $T_s$ instead of the transmutation operator $\mathbf{T}$. Then for the construction of the corresponding integral kernels it is sufficient to know the potential $Q(x)$ only on $[0,b]$, see Theorem \ref{TcTsMapsSolutions}. Therefore in the Liouville transformation we can take $y_0=A$, and the approximate kernels and the approximate solutions can be constructed in an exactly same way with the only change, one has to use $y_0=A$.

The knowledge of the  initial values \eqref{InitCondV1} and \eqref{InitCondV2} is especially useful when the first boundary condition \eqref{BC} involves values of the solution and of its derivative only at the point $A$, i.e., has the form
\begin{equation}\label{BC1}
    a_{11}v(A)+a_{12}v'(A)=0.
\end{equation}
In this case the proposed algorithm can be slightly simplified.
\begin{itemize}
\item[1--2.] Compute $l(y) = \int_A^y \{ r(s)/p(s)\}^{1/2}\,ds$, $y\in[A,B]$.
\item[3--7.] Perform the same steps as in the original algorithm using $y_0=A$.
\item[8.] Find a nontrivial linear combination $c_1v_1+c_2v_2$ satisfying the first boundary condition \eqref{BC1}. According to Remark \ref{Rmk NormalizedSolutions} one can take, e.g., $v=-a_{12}V_1+a_{11}V_2$. The characteristic equation of the problem is given by the second boundary condition. Replacing $v_1$ and $v_2$ and their derivatives by the corresponding approximations one obtains a function whose zeros approximate the eigenvalues of the spectral problem.
\end{itemize}

\subsection{Numerical examples}
\begin{example}\label{Ex1}
Consider the following spectral problem (c.f., Example \ref{Example16})
\begin{equation}\label{ExEq1}
\begin{cases}
-v''-\frac{v'}{y}+\left(  \frac{1}{4y^{2}}+\frac{2}{(y-\frac{1}{2})^{2}}\right)v=\omega^2 v,\\
v(1)=v(2)=0.
\end{cases}
\end{equation}
To make its consideration consistent with Example \ref{Example16} we applied the first proposed algorithm. A particular solution $g$ satisfying the initial conditions
\[
g(y_0)=\sqrt{\frac 23},\qquad g'(y_0)=\frac 53\sqrt{\frac 23}
\]
(which give us $h=2$ according to \eqref{PartSolIC} and \eqref{h}) was computed numerically using the SPPS representation \cite{KrPorter2010} in Matlab 2010 in machine precision. On step 4, 60 formal powers and 4001 uniformly distributed points were used to represent all the functions involved and to perform all the integrations modified 6 point Newton-Cottes integration rule was applied.

As was shown in \cite{KT Transmut}, the kernel of the transmutation operator $\mathbf{T}$ corresponding to \eqref{ExEq1} is a finite linear combination of generalized wave polynomials and hence the approximations \eqref{cN} and \eqref{sN} turn out to be exact solutions with the following coefficients
\[
a_0=1,\quad a_1=-\frac 32,\quad a_2=-\frac 34,\quad a_3=\frac 34,\quad b_1=\frac 12,\quad b_2=-\frac 34.
\]
We approximated numerically the functions $G_1$ and $G_2$ from \eqref{Opt1} and \eqref{Opt2} using the Remez algorithm (see \cite{KKTT} and references therein) and obtained the following coefficients
\begin{gather*}
a_0=1,\quad a_1=-1.5,\quad a_2=-0.75,\quad a_3=0.749999999999999,\\ b_1=0.500000000000001,\quad b_2= -0.750000000000001,
\end{gather*}
close to the exact ones.

The exact characteristic equation for the problem \eqref{ExEq1} can be written in the form
\begin{equation}\label{Ex1CharEq}
(4+3\omega^2)\sin\omega - 4\omega\cos\omega=0.
\end{equation}
This allows us to compare numerical results and the exact ones. The exact eigenvalues were found from \eqref{Ex1CharEq} by Wolfram Mathematica's function \texttt{FindRoot}. To find approximate eigenvalues we computed the characteristic function for the values of $\omega$ from 1 to 101 with the stepsize $0.02$, constructed a spline through the obtained points and found its zeros. Matlab's functions \texttt{spapi} and \texttt{fnzeros} were used. All 32 eigenvalues on this segment were found, and the maximal absolute error of the approximate eigenvalues was $1.4\cdot 10^{-14}$, while the relative error was less than $5\cdot 10^{-16}$.
\end{example}

\begin{example}\label{Ex2}
Consider the Bessel equation \cite[Eqn. 2.162]{Kamke} with the following boundary conditions
\begin{equation}\label{Ex2Eq}
    \begin{cases}
        (xu')'+xu = -\lambda\frac ux,\\
        u'(1)=u(4)=0.
    \end{cases}
\end{equation}
The characteristic equation of this problem has the form
\begin{equation}\label{Ex2CharEq}
    J_{i\omega}(4)\bigl(Y_{i\omega+1}(1)-Y_{i\omega -1}(1)\bigr) = Y_{i\omega}(4)\bigl(J_{i\omega +1}(1)-J_{i\omega-1}(1)\bigr),
\end{equation}
where $\lambda = \omega^2$, and the spectrum consists of one negative eigenvalue and of an infinite series of positive eigenvalues. In terms of normalized solutions introduced in Remark \ref{Rmk NormalizedSolutions} the characteristic equation has the form $V_1(\omega, 4) = 0$.

To this problem we applied the simplified algorithm as described in the end of the previous subsection. Note that the mapping $l(y)$ for this problem is not linear. For a nonvanishing particular solution $g$ we used the complex valued combination $g=g_1+ig_2$ of the solutions computed numerically from the SPPS representation (see \cite[Remark 5]{KrPorter2010}). As in the previous example, we used 60 formal powers and a uniform mesh of 4001 points to represent all the functions involved. The least squares method was used to solve the approximation problems \eqref{Opt1} and \eqref{Opt2}, where 28 functions were sufficient for an optimal machine-precision approximation. The corresponding $\varepsilon_1$ and $\varepsilon_2$ were $9\cdot 10^{-15}$ and $1.4\cdot 10^{-14}$. As it has been already observed in \cite[Example 7.5]{KT AnalyticApprox}, the proposed method may produce unreliable results for the values of $\omega$ close to the origin. To overcome this difficulty we combined the computed eigenvalues with those obtained from the SPPS method (known to work the best close to the origin). All 88 eigenvalues satisfying $\lambda \le 200^2$ were found with a maximum relative error $5\cdot 10^{-15}$. The computation time was 1.7 seconds. On Figure \ref{Ex2Fig} we present the absolute and the relative errors of the computed eigenvalues. The exact eigenvalues were obtained solving the characteristic equation \eqref{Ex2CharEq} with the aid of Mathematica 8.
\begin{figure}[tbh]
\centering
\includegraphics[
bb=132   316   477   475]{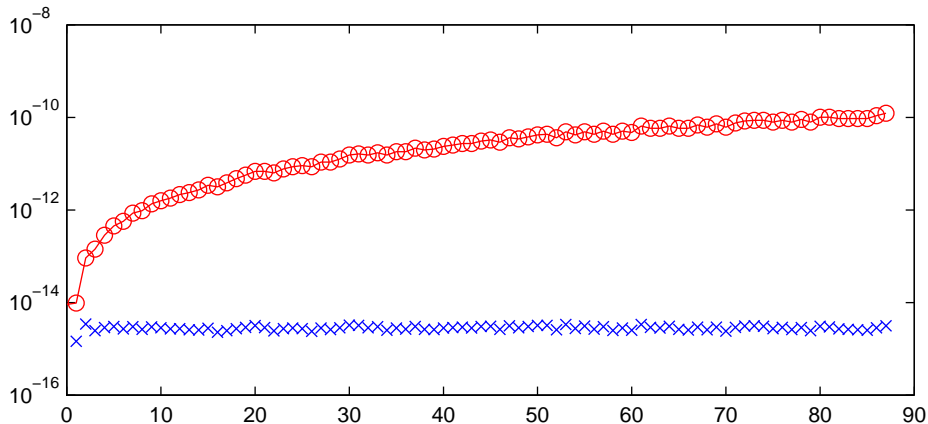}
\caption{Graphs of the absolute (red circles)
and relative (blue crosses) errors of the computed eigenvalues from Example \ref{Ex2}.
The eigenvalue index is on the abscissa.}
\label{Ex2Fig}
\end{figure}
\end{example}

\begin{example}\label{Ex3}
Consider the following problem (see \cite[Eqn. 2.273(11)]{Kamke})
\begin{equation}\label{Ex3Problem}
\begin{cases}
u''-2u'+u=-\lambda(y^2+1)u,\\
u(0)-u'(0)=0,\\
u(2)+u'(2)=0.
\end{cases}
\end{equation}
This problem is of the form \eqref{SL} with $p(y)=e^{-2y}$, $q(y)=-e^{-2y}$ and $r(y)=(y^2+1)e^{-2y}$. The characteristic equation of the problem has the form
\begin{equation}\label{Ex3CharEq}
    (1-i\omega)_1F_1\left(\frac{1+i\omega}4,\frac 12, 4i\omega\right)=
    (\omega^2-i\omega)_1F_1\left(\frac{5+i\omega}4,\frac 32, 4i\omega\right),
\end{equation}
where $\lambda = \omega^2$ and $_1F_1$ is the Kummer confluent hypergeometric function. In terms of the normalized solutions introduced in Remark \ref{Rmk NormalizedSolutions} the characteristic equation can be written as
\[
V_1(\omega, 2) + V_2(\omega, 2) + V'_1(\omega, 2) + V'_2(\omega, 2)=0.
\]

We applied the ``simplified'' algorithm. A non-vanishing particular solution was computed using the SPPS representation. The parameters for computation were chosen as in the previous examples. The precision achieved in solving the approximation problems \eqref{Opt1} and \eqref{Opt2} was $2.8\cdot 10^{-8}$ and $3.8\cdot 10^{-8}$ respectively (with 20 functions involved). We computed the first 100 eigenvalues and compared them with the exact ones obtained with the help of Wolfram Mathematica from the characteristic equation \eqref{Ex3CharEq}.
The maximum absolute and relative errors of the approximate eigenvalues were $4.7\cdot 10^{-7}$ and $8.5\cdot 10^{-9}$, respectively. On Figure \ref{Ex3Fig} we present the graphs of the errors.
\begin{figure}[tbh]
\centering
\includegraphics[
bb=132   316   477   475]{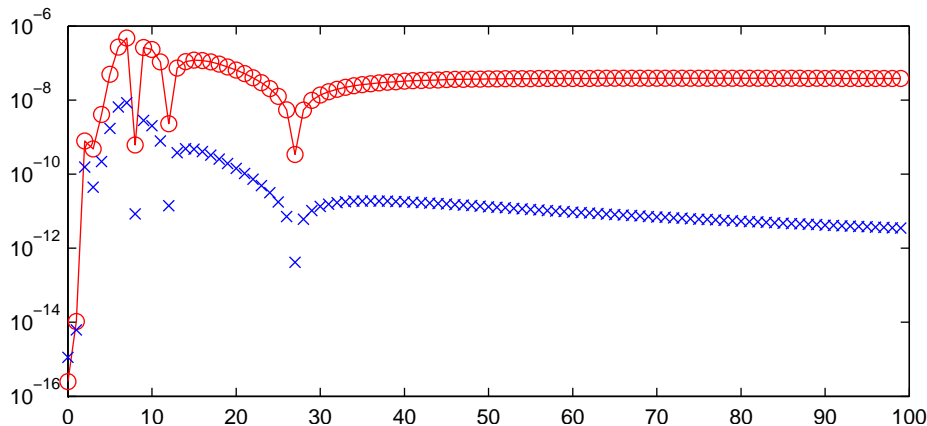}
\caption{Graphs of the absolute (red circles)
and relative (blue crosses) errors of the computed eigenvalues from Example \ref{Ex3}.
The eigenvalue index is on the abscissa.}
\label{Ex3Fig}
\end{figure}
\end{example}

\begin{remark}\label{Rmk ExamplesNoLiouville}
Although the first step in applying the proposed method is the Liouville transformation which apparently requires that the coefficients $p$ and $r$ be real-valued, a more thorough analysis leads to a surprising conclusion, that this condition might be superfluous. Indeed, the construction of functions $\rho$, $l$, $\Phi_n$, $\Psi_n$, $\widetilde{\mathbf{c}}_{m}$ and $\widetilde{\mathbf{s}}_{m}$ and Remark \ref{RmkNoLiouville} show that the approximation problems can be written in terms of quantities defined by the original equation \eqref{SL} only, without an explicit use of the Liouville transformation.
\end{remark}

In the last example we illustrate that the proposed method works under more general conditions on the coefficients $p$ and $r$.

\begin{example}\label{Ex4 complex}
Consider the following spectral problem
\begin{equation}\label{Ex4Problem}
\begin{cases}
u''=-\lambda e^{iy}u,\\
u'(0)=0,\qquad u(\pi)+u'(\pi)=0.
\end{cases}
\end{equation}
The characteristic function of the problem is given by
\[
F(\omega) =\omega  \Bigl(2K_1(2 \omega )\bigl(J_0(2 \omega )-i \omega  J_1(2 \omega )\bigr) +\pi
    I_1(2 \omega ) \bigl(-i J_0(2 \omega )-\omega  J_1(2 \omega )-Y_0(2
   \omega )+i \omega  Y_1(2 \omega )\bigr)\Bigr),
\]
where $\omega^2=\lambda$ and $I$, $J$, $K$ and $Y$ are the Bessel functions. In terms of the normalized solutions introduced in Remark \ref{Rmk NormalizedSolutions} the characteristic function can be written as $F(\omega)=V_1(\omega, \pi) + V_1'(\omega,\pi)$. Since the characteristic function is necessarily an analytic function of the variable $\omega$, we can apply the argument principle to localize its zeros, see \cite{Ying Katz}, \cite{Dellnitz Schutze Zheng} and our recent paper \cite{KTV} for details. After the zeros were localized within rectangles with the sides smaller than 0.1, we applied several Newton iterations to obtain approximate eigenvalues, the values of $F'(\omega)$ were computed using formulas \eqref{v1Nomega} and \eqref{v2Nomega}. On Figure \ref{Ex4Fig} we illustrate the work of the algorithm based on the argument principle. The absolute errors of the calculated eigenvalues satisfying $|\operatorname{Re}\omega|\le 50$ were less than $2.9\cdot 10^{-8}$ while the errors achieved in the approximation problems \eqref{Opt1} and \eqref{Opt2} were $2.7\cdot 10^{-11}$.
\begin{figure}[tbh]
\centering
\includegraphics[
bb=132   287   477   504]{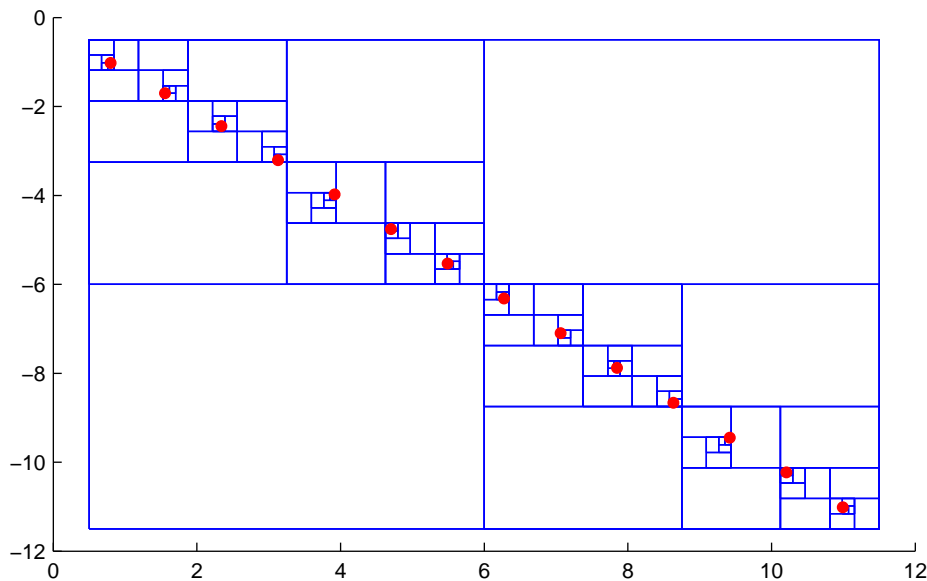}
\caption{Illustration to the work of the algorithm based on the argument principle in Example \ref{Ex4 complex}. Blue rectangles show the regions used to count the number of zeros on the subdivision step. Red circles mark the found eigenvalues.}
\label{Ex4Fig}
\end{figure}
\end{example}

Based on Remark \ref{Rmk ExamplesNoLiouville} and Example \ref{Ex4 complex}, we can formulate the following conjecture.
\begin{conjecture}
Theorem \ref{Thm Approxim Sols} holds and the proposed method works under weaker conditions than those required by Lemma \ref{definitionH}. Namely, it is sufficient that $p$ and $r$ be complex-valued nonvanishing on $(A,B)$, the requirement $p(y)$, $r(y)>0$ is superfluous.
\end{conjecture}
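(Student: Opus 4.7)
The plan is to bypass the explicit Liouville transformation and verify the conclusions of Theorem \ref{Thm Approxim Sols} directly in terms of quantities depending only on the original coefficients $p, q, r$. The core observation, already highlighted in Remark \ref{Rmk ExamplesNoLiouville}, is that every ingredient of the algorithm---$\rho(y)$, $l(y)$, the formal powers $\Phi_k, \Psi_k$, the systems $\widetilde{\mathbf{c}}_m, \widetilde{\mathbf{s}}_m$, and the functions $G_1, G_2$---makes sense whenever $p$ and $r$ are continuous nonvanishing complex-valued functions on $(A,B)$: since $(A,B)$ is simply connected, one can fix continuous branches of $(pr)^{1/4}$ and $\sqrt{r/p}$, and a nonvanishing particular solution $g$ of \eqref{SLhom} exists by \cite[Remark 5]{KrPorter2010} or via the device of Remark \ref{Rmk Particular solution}.

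First, I would reinterpret the Liouville transformation in the complex setting: the map $y\mapsto x=l(y)$ sends $[A,B]$ to a rectifiable curve $\gamma\subset\mathbb{C}$, and one verifies by the same algebraic computation that underlies \eqref{BL=LC} that $BL=LC$ continues to hold, with $B=-\partial_x^2+Q$ now understood along $\gamma$ and $Q$ defined by \eqref{Q} (or equivalently by the first-derivative formula of Remark \ref{Rmk NoSecondDerivatives}). Theorem \ref{relphikpsik} extends verbatim because its proof is a purely algebraic induction using only the chain rule together with $l'(y)=\sqrt{r(y)/p(y)}$ and $\rho^2(y)=(p(y)r(y))^{1/2}$.

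Second, I would extend the transmutation construction of \cite{KKTT} to the curve $\gamma$. The Goursat problem for the kernel $K(x,t)$ is solvable on $\gamma\times\gamma$ by the classical Picard iteration; the identities $\mathbf{T} x^k=\varphi_k(x)$ and the approximation result Theorem \ref{ThApproxCS} are formal consequences of this construction and transfer once the interval $[-b,b]$ is replaced by $\gamma$. The estimates \eqref{estc2}--\eqref{estc4} then take the form $\varepsilon\sinh(C\ell)/C$ with $\ell$ the arclength of $\gamma$, and they propagate to \eqref{estv2}--\eqref{estv4} via $\|L^{-1}\|=\|1/\rho\|$ exactly as in the proof of Theorem \ref{Thm Approxim Sols}.

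The main obstacle, and the reason the statement must remain a conjecture, is the approximation step \eqref{Opt1}--\eqref{Opt2}. In the real setting, the existence of coefficients $\{a_n\}, \{b_n\}$ making $\varepsilon_1, \varepsilon_2$ arbitrarily small follows from the completeness of $\{\widetilde{\mathbf{c}}_m\}, \{\widetilde{\mathbf{s}}_m\}$ in suitable function spaces on $[A,B]$, which ultimately reduces via the transmutation $\mathbf{T}$ to Weierstrass density of polynomials on the real interval $[-b,b]$. In the complex setting the analogous density statement becomes density of polynomials in $l(y)$ inside $C[A,B]$, which holds whenever $l:[A,B]\to\mathbb{C}$ is injective---for instance when $\mathrm{Re}\sqrt{r/p}$ has constant sign on $[A,B]$, so that $\gamma$ does not self-intersect. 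A full proof under the unrestricted hypotheses of the conjecture would require a Mergelyan-type density result for polynomial approximation along possibly self-intersecting analytic arcs in $\mathbb{C}$, and this is where I expect the principal analytical difficulty to lie.
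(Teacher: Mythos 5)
The statement you were asked to prove is not proved in the paper at all: it is explicitly a \emph{conjecture}, supported only by Remark \ref{Rmk ExamplesNoLiouville} (the observation that $\rho$, $l$, $\Phi_n$, $\Psi_n$, $\widetilde{\mathbf{c}}_m$, $\widetilde{\mathbf{s}}_m$, $G_1$, $G_2$ and the approximation problems \eqref{Opt1}--\eqref{Opt2} can all be written in terms of $p$, $q$, $r$ alone, with no explicit use of the Liouville transformation) and by the numerical evidence of Example \ref{Ex4 complex}, where $p\equiv 1$ and $r(y)=e^{iy}$ and the method nevertheless produces eigenvalues with errors below $2.9\cdot 10^{-8}$. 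Your final conclusion --- that the statement must remain a conjecture --- is therefore exactly aligned with the paper; there is no hidden proof you failed to reproduce, and your sketch is in fact more detailed than anything the authors offer.

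However, one point in your roadmap deserves pushback: you locate the ``principal analytical difficulty'' in the density step (Mergelyan-type polynomial approximation along the arc $\gamma$), while treating the extension of the transmutation machinery of \cite{KKTT} to $\gamma$ as a formal transfer. This gets the difficulty backwards. Theorem \ref{Thm Approxim Sols} is \emph{conditional}: it asserts that \emph{whenever} coefficients satisfying \eqref{Opt1}--\eqref{Opt2} are found (however small or large $\varepsilon_1,\varepsilon_2$ may be), the estimates \eqref{estv2}--\eqref{estv4} follow. Density is not part of the claim. What \emph{is} part of the claim is the implication from \eqref{KxxErr}--\eqref{KxmxErr} to \eqref{estc2}--\eqref{estc4}, i.e.\ Theorem \ref{ThApproxCS} of \cite{KT AnalyticApprox}, and that result rests on genuinely real-variable machinery: the Goursat problem for the kernel $K(x,t)$, successive-approximation bounds over real triangles $\{|t|\le|x|\le b\}$, and the $\sinh(Cx)/C$ estimates derived from them. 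For merely continuous complex-valued $p$ and $r$ the image $\gamma=l([A,B])$ is a curve in $\mathbb{C}$ along which a second-order equation $u''-Qu=-\lambda u$ has no intrinsic meaning (differentiation along $\gamma$ requires analytic continuation, hence analyticity of the coefficients), so ``solvability of the Goursat problem on $\gamma\times\gamma$ by Picard iteration'' is not a formal consequence of the real case; it is the heart of the open problem. A correct account of the obstruction would name this step, not the density step, as the reason the conjecture resists proof under the stated hypotheses.
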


\end{document}